\newtheorem{thm}{Theorem}
\newtheorem{lem}[thm]{Lemma}
\newtheorem{rem}[thm]{Remark}
\newtheorem{quest}{Question}
\newtheorem{prop}[thm]{Proposition}
\newtheorem{cor}[thm]{Corollary}
\newtheorem{dfn}[thm]{Definition}
\newtheorem{ex}[thm]{Example}
\begin{document}

\title{Measurable operators and the asymptotics of heat kernels and zeta functions}

\author{Alan Carey}
\ead{alan.carey@anu.edu.au}
\author{Fedor Sukochev}
\ead{f.sukochev@unsw.edu.au}

\cortext[cor1]{Corresponding Author}

\address{Mathematical Sciences Institute, Australian National University, Canberra ACT, 0200, Australia}

\address{School of Mathematics and Statistics, University of New South Wales, Sydney, 2052, Australia.}

\begin{abstract}
In this note we answer some questions inspired by the introduction in \cite{C1,C}, by Alain Connes, of the notion of measurable operators using Dixmier traces. These questions concern the relationship of measurability to the asymptotics
of $\zeta-$functions and heat kernels. The answers have remained elusive for some 15 years
\footnote{We dedicate this paper to the memory of Nigel Kalton.}.
\end{abstract}

\begin{keyword} Dixmier traces, heat kernels, measurable elements, generalized limits, Ces\`aro operator.

\medskip \MSC Primary: 58B34\sep 46L52
\end{keyword}

\maketitle

\section{Introduction and Preliminaries}

In \cite{C} {(see also \cite{C1})} Alain Connes described in
part the relationship between Dixmier traces, heat kernel
asymptotics and the behaviour of $\zeta-$functions at their leading
singularity. In that discussion he introduced the notion of a
measurable operator. Subsequently these notions have arisen in
other contexts and interest has been generated in obtaining a
comprehensive picture of how they are related. The present authors
were forced to confront these ideas in their attempts to develop tools for
semifinite noncommutative geometry in \cite{CPS}, {\cite{CS}}.
Similar issues arise also in \cite{BF}.
In addition, after discussions with many colleagues, it
became clear to us that, for applications, extensions of \cite{C,
CPS} were needed. There has been considerable progress in the last
few years in \cite{BF, CRSS, KSS, LS, LSS, sed1, sed2, SS, SUZ, SZ}. In this
note we provide the final answer to two of the outstanding
questions.

As we have done previously in \cite{CPS}, {\cite{CS}} we will work  in the generality of
semifinite von Neumann algebras although even for the more
standard case of the bounded operators on Hilbert space the
results of this paper are  new. Let $\mathcal{M}$ be a von Neumann
algebra equipped with a faithful normal semifinite trace $\tau.$
For every operator $A\in\mathcal{M},$ let $E_{|A|}(s,\infty)$ denote
the spectral measure of
$|A|$, then its distribution function
$d_A$ and rearrangement $\mu(A)$ are defined by the following
formulas:
$$d_A(s)=\tau(E_{|A|}(s,\infty)),\ s>0$$
$$\mu(t,A)=\inf\{s:\ d_A(s)\leq t\},\ t>0.$$

The following sets of operators from $\mathcal{M}$ are widely used in noncommutative geometry (see \cite{C, BF, CPS, CRSS, CS, LS, LSS, SZ}). The reader should
be aware of the fact that the notation we are using is not that of \cite{C}.
$$\mathcal{M}_{1,\infty}=\{A\in\mathcal{M}:\ \mathop{\rm sup}_{t\in (0,\infty)}\frac1{\log(1+t)}\int_0^t\mu(s,A)ds<\infty\}$$
and
$$\mathcal{L}_{1,\infty}=\{A\in\mathcal{M}:\ \sup_{t>0}t\mu(t,A)<\infty\}.$$
Equipped with the norm
$$
\|A\|_{\mathcal{M}_{1,\infty}}:=\mathop{\rm sup}_{t\in (0,\infty)}\frac{1}{\log(1+t)}\int _0^t\mu(s,A)ds
$$
the first set is an example of a Marcinkiewicz operator space. The
second set is the so-called weak $\mathcal{L}_1$ space, which is a
linear (non-closed) subspace in $(\mathcal{M}_{1,\infty},
\|\cdot\|_{\mathcal{M}_{1,\infty}})$. Recall also that
$\mathcal{L}_{1,\infty}$ is not dense in $\mathcal{M}_{1,\infty}$
with respect to the norm $\|\cdot\|_{\mathcal{M}_{1,\infty}}$
{(see e.g. \cite[Lemma 5.5 in Ch.II.7]{KPS})}.

We need the (multiplicative) Cesaro operator acting on the space $L_{\infty}(0,\infty)$ of all essentially bounded Lebesgue measurable functions given  by the formula
\begin{equation} \label{Cesaro}(Mx)(\nu)=\frac1{\log(\nu)}\int_1^\nu x(s)\frac{ds}{s}.
\end{equation}
If $A\in\mathcal{L}_{1,\infty},$ then it follows from \cite[Lemma 5.1]{CRSS} that
\begin{equation}\label{hk weakl1}
\sup_{\lambda}\frac1{\lambda}\tau(e^{-(\lambda A)^{-1}})<\infty.
\end{equation}
The inequality \eqref{hk weakl1} does not necessarily hold for $A\in\mathcal{M}_{1,\infty}$ (see  \cite[Example on p. 274]{CRSS}). It is implicitly proved in \cite{CRSS} (see also \cite[Theorem 40 and Corollary 41]{SZ} where a much stronger result is established) that
\begin{equation}\label{hk marc}
\sup_{\lambda}M(\lambda\to\frac1{\lambda}\tau(e^{-(\lambda A)^{-1}}))<\infty
\end{equation}
for every $A\in\mathcal{M}_{1,\infty}$
where the notation is a shorthand for taking the supremum of the function obtained from 
applying $M$ to $\lambda\to\frac1{\lambda}\tau(e^{-(\lambda A)^{-1}})$. 

This note is motivated by the following two questions (that we will completely answer here).

\begin{quest}\label{CKS quest1} Suppose that $A\in\mathcal{L}^+_{1,\infty}$ is such that the  limit
$$\lim_{\lambda\to\infty}\frac1{\lambda}\tau(e^{-(\lambda A)^{-1}})$$
exists. What information is then available on the distribution function of the operator $A$?
\end{quest}

\begin{quest}\label{CKS quest2} Suppose that $A\in\mathcal{M}^+_{1,\infty}$ is such that the following limit
$$\lim_{\lambda\to\infty}M(\lambda\to\frac1{\lambda}\tau(e^{-(\lambda A)^{-1}}))$$
exists. What information is then available on the distribution function of the operator $A$?
\end{quest}
Note we are again using an obvious  shorthand notation in Question 2.
These questions are in fact related to somewhat similar matters
studied in \cite{CRSS} (there they are called questions A and B)
for the $\zeta$-function and the connection between them can be
established via the theory of Dixmier traces (see e.g. \cite{D,C,
CPS, BF, CS, LSS, LS, KSS, SS, SUZ}). Very briefly, we now recall
some basic definitions from that theory.

First, a positive normalised functional on a unital von Neumann algebra is called a state and any state
on the algebra $ L_{\infty}(0,\infty)$ is called a generalised limit if it vanishes on every function
with compact support.

Second, given $s>0$, a  dilation operator $\sigma_s:L_{\infty}(0,\infty)\to L_{\infty}(0,\infty)$
is defined by setting $(\sigma_sx)(t)=x(t/s)$.  A generalised limit $\omega$ is said to be dilation
invariant if $\omega\circ\sigma_s=\omega$ for every $s>0$.

Third, if $\omega$ is an arbitrary dilation invariant generalised limit then a Dixmier trace
$\tau_{\omega}$ on $\mathcal{M}_{1,\infty}$ is defined (see \cite[Definition 9]{KSS}) by the formula
$$\tau_{\omega}(A):=\omega(t\to\frac1{\log(1+t)}\int_0^t\mu(s,A)ds),\quad A\in\mathcal{M}^+_{1,\infty}.$$
Recall that a positive linear functional $\varphi$ on $\mathcal M_{1,\infty}$ is called fully symmetric if
for all $0\leq A,B\in\mathcal{M}_{1,\infty}$ such that
\[
\int_{0}^t\mu(s,B)ds\leq\int_{0}^t\mu(s,A)ds,\quad \mbox{for all } t>0,
\]
we have $\varphi(B)\leq\varphi(A)$. { In this note the largest possible class of Dixmier traces,
namely the class
$$
\mathcal{D}:=\{\tau_\omega: \ \omega\ {\rm is\ a \ dilation\ invariant\ generalized\ limit}\}
$$
of all Dixmier traces is needed. (See further possibilities in \cite{C, LSS,
CS, LS, SUZ, SS}). That this class is natural is confirmed by the
fact that $\mathcal{D}$ coincides with the class of all fully
symmetric singular functionals on $\mathcal M_{1,\infty}$. }

More precisely, the following assertion follows from
\cite[Theorem 11]{KSS}  if we set the function 
denoted by $\psi$ in that theorem to be $\psi(t)=\log(1+t)$. 

\begin{thm}\label{kss all theorem} For every fully symmetric functional $\varphi$ on $\mathcal{M}_{1,\infty},$
there exists a dilation invariant generalised limit $\omega$ such that the Dixmier trace $\tau_{\omega}=\varphi.$
\end{thm}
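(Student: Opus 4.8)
The plan is to prove the nontrivial inclusion: every fully symmetric functional $\varphi$ on $\mathcal M_{1,\infty}$ is realised as $\tau_\omega$ for a suitable dilation invariant generalised limit $\omega$ (the reverse implication, that each $\tau_\omega$ is fully symmetric, is routine from the definition of the Hardy--Littlewood--P\'olya order underlying full symmetry). Writing $\alpha_A(t):=\frac{1}{\log(1+t)}\int_0^t\mu(s,A)\,ds$, membership of $A$ in $\mathcal M^+_{1,\infty}$ means precisely that $\alpha_A\in L_\infty(0,\infty)$, and the target identity is $\varphi(A)=\omega(\alpha_A)$ for all $A\in\mathcal M^+_{1,\infty}$. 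First I would reduce to the commutative model: since both $\varphi$ (through its defining submajorization inequality) and the map $A\mapsto\alpha_A$ depend on $A$ only through $\mu(\cdot,A)$, it suffices, via the Calkin-type correspondence between symmetric functionals on the operator space and on the associated function space, to construct $\omega$ so that the identity holds at the level of decreasing rearrangements, i.e.\ to work with the fully symmetric functional induced on the commutative Marcinkiewicz space $M_{1,\infty}(0,\infty)$.

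Next I would build $\omega$ first on a subspace and then extend. On the convex cone $\{\alpha_A:A\in\mathcal M^+_{1,\infty}\}\subset L_\infty$, full symmetry of $\varphi$ supplies exactly the control needed to define a positive functional $\omega_0$ by $\omega_0(\alpha_A):=\varphi(A)$: the implication $\int_0^t\mu(s,B)\,ds\le\int_0^t\mu(s,A)\,ds\ \Rightarrow\ \varphi(B)\le\varphi(A)$ both makes $\omega_0$ well defined on the cone and bounds it by $\|\varphi\|\,\|\alpha_A\|_\infty$. A Krein/Hahn--Banach extension then yields a state $\omega_1$ on $L_\infty(0,\infty)$ agreeing with $\varphi$ on the cone. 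Since every fully symmetric functional here annihilates the (closure of the) finite-trace operators, $\omega_1$ vanishes on all compactly supported functions, so it is already a generalised limit.

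The heart of the argument is to upgrade $\omega_1$ to a \emph{dilation invariant} generalised limit without disturbing its values on the cone. Here I would pass to the logarithmic variable $t=e^u$, under which the dilation semigroup $\{\sigma_s\}_{s>0}$ becomes the translation group of $\mathbb R$ and the Ces\`aro operator $M$ becomes an ordinary averaging $x(e^u)\mapsto\frac1u\int_0^u x(e^v)\,dv$. The fully symmetric condition translates into a Ces\`aro-type invariance of $\varphi$, so that $\alpha_A$ and $\sigma_s\alpha_A$ differ only by terms to which $\varphi$ is insensitive, the normalisation by $\log(1+t)$ rendering dilations asymptotically negligible. Averaging $\omega_1$ against an invariant mean $\mathfrak m$ on the amenable group $((0,\infty),\times)\cong(\mathbb R,+)$, via $\omega(f):=\mathfrak m\big(s\mapsto\omega_1(\sigma_s f)\big)$, then produces a dilation invariant state $\omega$ that still vanishes on compactly supported functions and still satisfies $\omega(\alpha_A)=\varphi(A)$, which is the desired equality $\tau_\omega=\varphi$.

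The main obstacle, and the step demanding the regularity of the concavity function $\psi(t)=\log(1+t)$, is precisely this last compatibility: dilation invariance is a priori far stronger than full symmetry, so one must verify that the invariant-mean average leaves the cone values unchanged, i.e.\ that $\omega_1(\sigma_s\alpha_A)=\varphi(A)$ for $\mathfrak m$-almost every $s$. This rests on showing that $A$ and its dilates are submajorization-comparable modulo a perturbation negligible for every singular fully symmetric functional, equivalently that the discrepancy $\sigma_s\alpha_A-\alpha_A$ lies in the kernel common to all such $\varphi$. Establishing this negligibility uniformly in $s$, together with confirming that the averaged functional retains its singularity, is the delicate quantitative core of the proof.
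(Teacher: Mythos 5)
You should first be aware that the paper contains no proof of Theorem \ref{kss all theorem} for you to match: the statement is imported wholesale from \cite[Theorem 11]{KSS} by specialising the function $\psi$ of that theorem to $\psi(t)=\log(1+t)$, and the argument given there proceeds quite differently from your extend-then-average route. More importantly, your route has genuine gaps. The first is the Hahn--Banach/Krein step. The set $\{\alpha_A:\,A\in\mathcal{M}^+_{1,\infty}\}$ is not a linear subspace and $A\mapsto\alpha_A$ is not additive: for positive $A,B$ one has only the two-sided estimate $\int_0^t\mu(s,A+B)\,ds\le\int_0^t\bigl(\mu(s,A)+\mu(s,B)\bigr)\,ds\le\int_0^{2t}\mu(s,A+B)\,ds$, so $\alpha_A+\alpha_B$ is squeezed between $\alpha_{A+B}$ and $(1+o(1))\,\sigma_{1/2}\alpha_{A+B}$ but coincides with neither. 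Hence the prescription $\omega_0(\alpha_A):=\varphi(A)$ need not be consistent with the linear relations holding in the span of your cone; removing the dilation by $2$ in this estimate is legitimate only once one knows $\varphi$ is insensitive to dilations, which is exactly what you plan to arrange \emph{afterwards} --- a circularity.

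The second gap is fatal and is precisely the point you defer. Set $B_s:=\frac1s\sigma_sA$, i.e.\ $\mu(u,B_s)=\frac1s\mu(u/s,A)$; then $\int_0^t\mu(u,B_s)\,du=\int_0^{t/s}\mu(u,A)\,du$, and a short computation gives $\sigma_s\alpha_A=\alpha_{B_s}+o(1)$ at infinity. Since generalised limits annihilate terms vanishing at infinity, your averaged functional returns $\omega(\alpha_A)=\mathfrak m\bigl(s\mapsto\varphi(B_s)\bigr)$, so the compatibility you need is exactly the dilation homogeneity $\varphi\bigl(\frac1s\sigma_sA\bigr)=\varphi(A)$. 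Full symmetry yields only one half of this: for $s\ge1$, $\int_0^{t/s}\mu(u,A)\,du\le\int_0^t\mu(u,A)\,du$ gives $\varphi(B_s)\le\varphi(A)$, so the invariant-mean average can a priori drop strictly below $\varphi(A)$. The reverse inequality is not a submajorisation statement: $\int_{t/s}^t\mu(u,A)\,du$ need not be $o(\log t)$ for $A\in\mathcal{M}_{1,\infty}$ (take $\mu(u,A)=\log(a_n)/a_n$ for $u\in(2a_{n-1},2a_n]$ with $a_n=e^{e^n}$; then $\int_{t/2}^t\mu(u,A)\,du\sim\log t$ along $t=2a_n$), so no decomposition of $A$ as $B_s$ plus a perturbation killed by every singular fully symmetric functional exists in the naive form you invoke. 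Establishing $\varphi(\frac1s\sigma_sA)=\varphi(A)$ for every fully symmetric $\varphi$ --- this is where the regularity $\log(1+2t)\sim\log(1+t)$ genuinely enters --- is in essence the whole content of \cite[Theorem 11]{KSS}; the phrase ``differ only by terms to which $\varphi$ is insensitive'' therefore assumes the theorem being proved. (Your subsidiary claim that fully symmetric functionals on $\mathcal{M}_{1,\infty}$ are automatically singular also requires an argument, though that one is comparatively routine.)
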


Now we establish the notation for, and background to, our main theorem.

Let $\omega$ be an arbitrary dilation invariant generalised limit. A heat kernel functional $\xi_{\omega}$ is defined (see \cite[Sections 1 and 5]{SZ}) by the formula
$$\xi_{\omega}(A):=(\omega\circ M)(\lambda\to \frac1{\lambda}\tau(e^{-(\lambda A)^{-1}})),\quad A\in\mathcal{M}^+_{1,\infty}.$$
See  \cite{C, CPS, CRSS, SZ}) for the reasons for this particular form of the definition of the heat kernel functional.

Let $\gamma$ be an arbitrary generalised limit. The $\zeta$-function residue (associated with $\gamma$) is defined (see \cite[Section 1]{SZ} and also \cite{C, CPS}) by the formula
$$\zeta_{\gamma}(A):=\gamma(r\to\frac1r\tau(A^{1+1/r})).$$

Evidence that the zeta and heat kernel functionals are
closely related comes from the  following theorems, proved in \cite{SZ}.

\begin{thm}\label{zeta lin fs}\cite[Theorem 8]{SZ}. For every generalised limit $\gamma$, the $\zeta$-function residue $\zeta_{\gamma}$ is a fully symmetric functional on $\mathcal{M}_{1,\infty}$.
\end{thm}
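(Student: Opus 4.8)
The plan is to verify the three defining properties of a fully symmetric functional in turn: positivity, linearity, and monotonicity under Hardy--Littlewood submajorization. Throughout I write $p=1+1/r$, so that $p\to 1^+$ as $r\to\infty$ and $\tfrac1r=p-1$, and I use that $\tfrac1r\tau(A^{1+1/r})=(p-1)\tau(A^p)=(p-1)\|A\|_p^p$ is a bounded function of $r$ for $A\in\mathcal{M}^+_{1,\infty}$ (this boundedness is exactly what makes $\zeta_\gamma$ well defined, so that the argument of $\gamma$ lies in $L_\infty(0,\infty)$). Positivity is immediate: $A\ge 0$ forces $A^p\ge 0$, hence $(p-1)\tau(A^p)\ge 0$, and $\gamma$ is a state. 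Positive homogeneity is almost as easy, since $(cA)^p=c^pA^p$ gives $(p-1)\tau((cA)^p)=c^p(p-1)\tau(A^p)$, and because $c^p\to c$ while $(p-1)\tau(A^p)$ stays bounded, the function $r\mapsto(c^p-c)(p-1)\tau(A^p)$ vanishes at infinity and is therefore annihilated by $\gamma$ (a generalised limit kills every bounded function vanishing at infinity, since it kills all compactly supported functions).

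The crux is additivity on the positive cone, $\zeta_\gamma(A+B)=\zeta_\gamma(A)+\zeta_\gamma(B)$ for $A,B\in\mathcal{M}^+_{1,\infty}$. First I would sandwich $\tau((A+B)^p)$: the lower bound $\tau(A^p)+\tau(B^p)\le\tau((A+B)^p)$ follows, for $p\in(1,2]$, from operator monotonicity of $t\mapsto t^{p-1}$, which yields $\tau((A+B)^{p-1}A)\ge\tau(A^p)$ and the analogous inequality for $B$; the upper bound $\tau((A+B)^p)\le(\|A\|_p+\|B\|_p)^p$ is Minkowski's inequality in $L_p(\mathcal{M},\tau)$. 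Writing $a=\|A\|_p$, $b=\|B\|_p$ and assuming $a\ge b$, the gap is
$$0\le\tau((A+B)^p)-\tau(A^p)-\tau(B^p)\le(a+b)^p-a^p-b^p=a^p\,\phi(b/a,p),$$
where $\phi(\theta,p)=(1+\theta)^p-1-\theta^p\ge 0$. The decisive observation is that $\phi(\cdot,1)\equiv 0$ and $\phi$ is continuous on the compact set $[0,1]\times[1,2]$, so $\sup_{\theta\in[0,1]}\phi(\theta,p)\to 0$ as $p\to 1^+$. Multiplying the displayed gap by $(p-1)$ and using $(p-1)a^p\le M$ shows that the defect $\tfrac1r\bigl[\tau((A+B)^{1+1/r})-\tau(A^{1+1/r})-\tau(B^{1+1/r})\bigr]$ tends to $0$ as $r\to\infty$; being annihilated by $\gamma$, it yields additivity. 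A routine Jordan-decomposition argument then extends $\zeta_\gamma$ from the positive cone to a positive linear functional on all of $\mathcal{M}_{1,\infty}$.

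Full symmetry is the easiest part. If $0\le A,B$ satisfy $\int_0^t\mu(s,B)\,ds\le\int_0^t\mu(s,A)\,ds$ for all $t>0$, then, because $\|\cdot\|_p$ is a fully symmetric (rearrangement-invariant) norm for every $p\ge 1$, submajorization gives $\|B\|_p\le\|A\|_p$, i.e. $\tau(B^p)\le\tau(A^p)$ for all $p\in(1,\infty)$. Hence $\tfrac1r\tau(B^{1+1/r})\le\tfrac1r\tau(A^{1+1/r})$ pointwise in $r$, and applying the positive (hence monotone) functional $\gamma$ gives $\zeta_\gamma(B)\le\zeta_\gamma(A)$.

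I expect the additivity step to be the main obstacle, and within it the genuinely delicate point is not the two operator inequalities but the asymptotic cancellation: naive bounds on $(a+b)^p-a^p-b^p$ only give $O(1)$ after multiplication by $(p-1)$, so one must exploit the hidden factor $p-1$ that emerges when $a$ and $b$ are comparable. The uniform-continuity reformulation above is precisely what converts this into a transparent estimate; the remaining care is to ensure that the constant $M$ bounding $(p-1)\|A\|_p^p$ and $(p-1)\|B\|_p^p$ is uniform over $p\in(1,2]$, which is guaranteed by the membership $A,B\in\mathcal{M}_{1,\infty}$.
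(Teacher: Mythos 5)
The paper itself contains no proof of this statement: it is imported verbatim as \cite[Theorem 8]{SZ}, so there is no internal argument to compare against, and your self-contained proof is necessarily a different route from the note's (which simply defers to \cite{SZ}). Having checked it, your proof is correct. Positivity and homogeneity are routine, as you say; the additivity argument is sound: the lower bound $\tau(A^p)+\tau(B^p)\le\tau((A+B)^p)$ via operator monotonicity of $t\mapsto t^{p-1}$ (valid precisely for $p\in(1,2]$, i.e.\ $r\ge1$, which suffices because generalised limits are insensitive to behaviour on compact sets), the upper bound via the triangle inequality in $L_p(\mathcal{M},\tau)$, and the compactness observation --- indeed one can be completely explicit, since $\sup_{\theta\in[0,1]}\bigl[(1+\theta)^p-1-\theta^p\bigr]=2^p-2\to0$ as $p\to1^+$, so the defect is at most $M(2^p-2)$ and is annihilated by $\gamma$. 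Full symmetry via full symmetry of the noncommutative $L_p$-norm (submajorization is preserved under convex increasing functions vanishing at $0$) is also correct, and positivity of $\gamma$ finishes that step. The one soft spot is the bound you assert twice without proof, namely $(p-1)\tau(A^p)\le M$ uniformly in $p\in(1,2]$: this is not merely a convention attached to the definition of $\zeta_\gamma$ but a genuine ingredient of your additivity estimate, so you should close the loop. Conveniently, it follows from exactly the submajorization principle you invoke in your final step: $\int_0^t\mu(s,A)\,ds\le C\log(1+t)=\int_0^t\frac{C}{1+s}\,ds$ with $C=\|A\|_{\mathcal{M}_{1,\infty}}$ means $\mu(A)$ is submajorized by $s\mapsto C/(1+s)$, whence $\tau(A^p)\le C^p\int_0^\infty(1+s)^{-p}\,ds=C^p/(p-1)$, so $M=\max(C,C^2)$ works. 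Two further cosmetic points: which of $\|A\|_p$, $\|B\|_p$ is the larger may switch as $p$ varies, but your bound is symmetric in $A$ and $B$, so no harm; and the function $r\mapsto\frac1r\tau(A^{1+1/r})$ may be unbounded as $r\to0^+$, so one should (as is standard, and as your restriction to $p\le2$ implicitly does) read the definition of $\zeta_\gamma$ modulo modification on a compact set. With these insertions your argument is complete and makes the theorem self-contained; what the note's citation buys instead is brevity, at the price of sending the reader to \cite{SZ}, where the result is established by estimates of a broadly similar flavour.
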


\begin{thm} \cite[Theorem 22]{SZ}. For every dilation invariant generalised limit $\omega$, the heat kernel functional $\xi_{\omega}$ is a fully symmetric functional on $\mathcal{M}_{1,\infty}$.
\end{thm}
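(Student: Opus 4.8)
The plan is to verify directly the two ingredients in the definition of a fully symmetric functional, positivity together with linearity, and monotonicity with respect to the Hardy--Littlewood order, using an explicit rewriting of $Mf_A$, where $f_A(\lambda):=\frac1\lambda\tau(e^{-(\lambda A)^{-1}})$. Since $\mu(s,e^{-(\lambda A)^{-1}})=e^{-1/(\lambda\mu(s,A))}$, one has $f_A(\lambda)=\frac1\lambda\int_0^\infty e^{-1/(\lambda\mu(s,A))}\,ds$, and a Tonelli interchange in the definition of $M$ (substituting $u=1/(\lambda\mu(s,A))$ in the inner integral over $\lambda$) gives the key identity
\[
(Mf_A)(\nu)=\frac1{\log\nu}\int_0^\infty\mu(s,A)\bigl(e^{-1/(\nu\mu(s,A))}-e^{-1/\mu(s,A)}\bigr)\,ds .
\]
Because $\omega$ annihilates every compactly supported function and every function tending to $0$ at infinity, I may discard the term $\frac{1}{\log\nu}\int_0^\infty\mu(s,A)e^{-1/\mu(s,A)}\,ds$, which is $O(1/\log\nu)$, and restrict to $[\nu_0,\infty)$ with $\nu_0>1$, obtaining $\xi_\omega(A)=\omega\bigl(\nu\mapsto\frac{1}{\log\nu}\,\tau(\Phi_\nu(|A|))\bigr)$ there, where $\Phi_\nu(x):=x\,e^{-1/(\nu x)}$ and $\Phi_\nu(0)=0$.

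The monotonicity is then the clean part. A direct computation shows that $\Phi_\nu$ is increasing and convex on $(0,\infty)$ with $\Phi_\nu(0)=0$, so that $\tau(\Phi_\nu(|A|))=\int_0^\infty\Phi_\nu(\mu(s,A))\,ds$ is monotone under submajorization: if $\int_0^t\mu(s,B)\,ds\le\int_0^t\mu(s,A)\,ds$ for all $t>0$, the Hardy--Littlewood--P\'olya inequality for increasing convex functions gives $\tau(\Phi_\nu(|B|))\le\tau(\Phi_\nu(|A|))$ for every $\nu$. Since $\log\nu>0$ on $[\nu_0,\infty)$ and $\omega$ is positive, this yields $\xi_\omega(B)\le\xi_\omega(A)$. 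Positivity of $\xi_\omega$ is immediate, and positive homogeneity follows from the dilation invariance of $\omega$: from $\mu(s,cA)=c\,\mu(s,A)$ one gets $f_{cA}=c\,\sigma_{1/c}f_A$, and since $M\sigma_s-\sigma_s M$ maps $L_\infty$ into functions vanishing at infinity, $\xi_\omega(cA)=c\,\omega(\sigma_{1/c}Mf_A)=c\,\xi_\omega(A)$.

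The main obstacle is additivity, and here the very convexity that helped above works against me: $\Phi_\nu$ is convex with $\Phi_\nu(0)=0$, hence superadditive, so the submajorization $\int_0^t\mu(s,A+B)\,ds\le\int_0^t(\mu(s,A)+\mu(s,B))\,ds$ cannot be converted into the subadditivity $\tau(\Phi_\nu(|A+B|))\le\tau(\Phi_\nu(|A|))+\tau(\Phi_\nu(|B|))$ that a naive estimate would need. For general $A\in\mathcal M^+_{1,\infty}$ the leading behaviour of $\tau(\Phi_\nu(|A|))$ as $\nu\to\infty$ is not a single number but the whole germ seen by $\omega$, so one cannot expect $\frac{1}{\log\nu}\bigl(\tau(\Phi_\nu(|A+B|))-\tau(\Phi_\nu(|A|))-\tau(\Phi_\nu(|B|))\bigr)$ to vanish pointwise.

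To overcome this I would route additivity through the $\zeta$-function residue, already known to be fully symmetric by Theorem \ref{zeta lin fs}. The Mellin--Laplace identity $\tau(A^{1+1/r})=\frac{1}{\Gamma(1+1/r)}\int_0^\infty t^{1/r}\,\tau(e^{-tA^{-1}})\,\frac{dt}{t}$ expresses the $\zeta$-function through the heat trace, and the plan is to show that the Ces\`aro--$\omega$ averaging defining $\xi_\omega$ and the $r\to\infty$ averaging defining $\zeta_\gamma$ extract the same leading coefficient, that is, that for a suitable generalised limit $\gamma$ manufactured from $\omega$ one has $\xi_\omega=\zeta_\gamma$ on $\mathcal M^+_{1,\infty}$; additivity, and hence full linearity, of $\xi_\omega$ then transfers from Theorem \ref{zeta lin fs}. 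The hard technical point, essentially a Tauberian theorem of Hardy--Littlewood--Karamata type and the place where the strong results \cite[Theorem 40 and Corollary 41]{SZ} enter, is precisely this matching of the two limiting procedures together with the construction of $\gamma$ from $\omega$: for operators in $\mathcal M_{1,\infty}\setminus\mathcal L_{1,\infty}$ the smooth heat cut-off and the sharp Ces\`aro truncation do not agree, so the identification must be carried out at the level of the generalised limits rather than pointwise.
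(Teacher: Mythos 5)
Your positive groundwork is correct: the Tonelli rewriting of $Mf_A$ via the substitution $u=1/(\lambda\mu(s,A))$ does give
$(Mf_A)(\nu)=\frac1{\log\nu}\int_0^\infty\mu(s,A)\bigl(e^{-1/(\nu\mu(s,A))}-e^{-1/\mu(s,A)}\bigr)ds$,
the subtracted term is indeed $O(1/\log\nu)$ (since $\mu e^{-1/\mu}\lesssim\mu^3$ for small $\mu$ and $\mu(s,A)\lesssim\log(1+s)/s$), the function $\Phi_\nu(x)=xe^{-1/(\nu x)}$ is increasing and convex with $\Phi_\nu(0)=0$ (one computes $\Phi_\nu''(x)=e^{-1/(\nu x)}/(\nu^2x^3)>0$), so Hardy--Littlewood--P\'olya yields monotonicity under submajorization, and the homogeneity argument via $f_{cA}=c\,\sigma_{1/c}f_A$ and dilation invariance is fine. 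Note, for the record, that the paper offers no proof of this statement at all --- it is quoted from \cite[Theorem 22]{SZ} --- so your attempt can only be judged on its own completeness. And there it fails: a fully symmetric functional is, by the paper's definition, a positive \emph{linear} functional with the monotonicity property, and you never establish additivity $\xi_\omega(A+B)=\xi_\omega(A)+\xi_\omega(B)$ on $\mathcal{M}^+_{1,\infty}$. You correctly diagnose why your own kernel cannot deliver it (superadditivity of $\Phi_\nu$ runs against the submajorization inequality), but the proposed repair --- manufacture a generalised limit $\gamma$ from $\omega$ with $\xi_\omega=\zeta_\gamma$ and import additivity from Theorem \ref{zeta lin fs} --- is stated as a ``plan'' and never carried out.

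That missing step is not a technical footnote; it is the theorem. Matching the smooth cutoff $e^{-1/(\nu\mu)}$ in the Ces\`aro--$\omega$ average against the power cutoff $\mu^{1/r}$ in the $\zeta$-residue, uniformly over the oscillating distribution functions occurring in $\mathcal{M}_{1,\infty}\setminus\mathcal{L}_{1,\infty}$, is precisely the Tauberian content of \cite{SZ}, of the same depth as \cite[Theorem 22]{SZ} itself. Worse, the obvious justification that such a $\gamma$ exists is circular: one knows the classes of heat kernel functionals and of $\zeta$-residues can be matched only because both coincide with the class of fully symmetric functionals (Theorems \ref{kss all theorem}, \ref{zeta lin fs} and \ref{sz all theorem} together with the present statement), and the inclusion ``every $\xi_\omega$ is fully symmetric'' is exactly what you are trying to prove. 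As it stands, your proposal proves positivity, homogeneity and monotonicity of $\xi_\omega$, and reduces its additivity to an unproven claim at least as strong as the statement under review; the gap would have to be closed by a genuine Tauberian argument in the spirit of \cite[Theorems 40 and Corollary 41]{SZ}, none of which is supplied.
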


\begin{thm}\label{sz all theorem} \cite[Theorem 31]{SZ}.
For every fully symmetric functional $\varphi$ on $\mathcal{M}_{1,\infty}$,
there exists a dilation invariant generalised limit $\omega$ such that the heat kernel functional $\xi_{\omega}=\varphi$.
\end{thm}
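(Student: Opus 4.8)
The plan is to deduce the statement from Theorem~\ref{kss all theorem} by showing that the heat kernel functionals exhaust exactly the same class of functionals as the Dixmier traces. One inclusion is immediate from the material already assembled: by the preceding theorem (\cite[Theorem 22]{SZ}) every $\xi_\omega$ is a fully symmetric functional on $\mathcal{M}_{1,\infty}$, and, as recalled above, the fully symmetric singular functionals are precisely the members of $\mathcal{D}=\{\tau_\omega\}$. Hence $\{\xi_\omega\}\subseteq\{\tau_\omega\}$, and the whole content of the theorem is the reverse inclusion: given a fully symmetric $\varphi$, produce a dilation invariant generalised limit $\omega$ with $\xi_\omega=\varphi$.

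To obtain this surjectivity I would in fact prove the stronger pointwise identity $\xi_\omega=\tau_\omega$ for \emph{every} dilation invariant generalised limit $\omega$; the desired conclusion then follows at once by composing with the surjectivity in Theorem~\ref{kss all theorem}. Fix $A\in\mathcal{M}^+_{1,\infty}$ and set $h_A(\lambda)=\frac1\lambda\tau(e^{-(\lambda A)^{-1}})$ and $f_A(t)=\frac1{\log(1+t)}\int_0^t\mu(s,A)\,ds$, so that $\xi_\omega(A)=\omega(Mh_A)$ and $\tau_\omega(A)=\omega(f_A)$. The key step is the \emph{Abelian} comparison of these two scalar functions. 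Writing the heat trace as $\tau(e^{-(\lambda A)^{-1}})=\int_0^\infty e^{-1/(\lambda\mu(s,A))}\,ds$, integrating by parts against $F_A(t)=\int_0^t\mu(s,A)\,ds$ and passing to the logarithmic variable $\lambda=e^u$, I would show, using \cite[Lemma 5.1]{CRSS} together with the estimates \eqref{hk weakl1}--\eqref{hk marc}, that for each fixed $A$ there is a dilation $\sigma_s$ with
\[
Mh_A-\sigma_s f_A\longrightarrow 0\quad\text{at infinity.}
\]
Since every dilation invariant generalised limit annihilates bounded functions tending to $0$ at infinity and is unchanged under $\sigma_s$, this immediately gives $\xi_\omega(A)=\omega(Mh_A)=\omega(\sigma_s f_A)=\omega(f_A)=\tau_\omega(A)$, as required; note that no uniformity in $A$ is needed, because the comparison is applied one operator at a time.

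The main obstacle is precisely this Abelian comparison: the heat kernel function is tied to the integrated singular value function through the nonlinear kernel $x\mapsto e^{-1/x}$ sitting inside the trace, and one must show that, after the Cesaro averaging $M$ built into the definition of $\xi_\omega$ and the logarithmic change of variable, the discrepancy between $Mh_A$ and a dilate of $f_A$ genuinely decays. This is where the hypothesis $A\in\mathcal{M}_{1,\infty}$ (rather than merely $\mathcal{L}_{1,\infty}$) and the Cesaro boundedness \eqref{hk marc} are essential. Should the clean relation above fail and only an averaged relation $Mh_A=Bf_A+o(1)$ hold for some fixed positive dilation-commuting operator $B$ with $B1=1$, the argument still closes: $B^\ast$ then maps the weak-$\ast$ compact convex set of dilation invariant generalised limits onto itself, so that $\{\xi_\omega\}=\{\tau_{B^\ast\omega}\}$ again coincides with the full class of fully symmetric functionals, the obstacle merely shifting to the surjectivity of $B^\ast$.
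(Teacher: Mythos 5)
There is a genuine gap, and it sits at the very center of your argument. You propose to prove the pointwise identity $\xi_\omega=\tau_\omega$ for \emph{every} dilation invariant generalised limit $\omega$, via the Abelian comparison $Mh_A-\sigma_s f_A\to 0$ at infinity. This identity is false: as the paper records immediately after stating Theorem \ref{sz all theorem}, \cite[Theorem 37]{SZ} exhibits dilation invariant generalised limits $\omega$ for which $\tau_\omega\neq\xi_\omega$. Consequently your claimed comparison cannot hold for every $A\in\mathcal{M}^+_{1,\infty}$: if $Mh_A-\sigma_s f_A$ tended to zero for each $A$, then every dilation invariant $\omega$ would annihilate the difference and be unchanged by $\sigma_s$, giving $\xi_\omega(A)=\omega(\sigma_s f_A)=\tau_\omega(A)$ for all $A$ and all such $\omega$, contradicting that counterexample. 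The functions $f_A$ and $Mh_A$ agree only in a suitably averaged sense; their oscillations at infinity need not match, and it is exactly this mismatch that makes $\omega\mapsto\tau_\omega$ and $\omega\mapsto\xi_\omega$ two \emph{different} parametrizations of the same class of functionals. The theorem asserts the coincidence of the two classes $\{\tau_\omega\}$ and $\{\xi_\omega\}$ as sets: given $\varphi=\tau_{\omega_0}$, one must construct a generally different $\omega$ with $\xi_\omega=\varphi$, and that construction is the entire content of \cite[Theorem 31]{SZ}, which this paper does not reprove but simply cites.

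Your fallback does not repair this. If instead $Mh_A=Bf_A+o(1)$ for some fixed positive dilation-commuting operator $B$ with $B1=1$, then indeed $\xi_\omega=\tau_{B^\ast\omega}$, but the conclusion you need is precisely that $\{\tau_{B^\ast\omega}:\omega\ \mathrm{dilation\ invariant}\}$ exhausts all fully symmetric functionals, i.e.\ a surjectivity statement for $B^\ast$; you give no argument for it, and the assertion that $B^\ast$ maps the weak-$\ast$ compact convex set of dilation invariant generalised limits \emph{onto} itself is unsupported (a weak-$\ast$ continuous affine self-map of a compact convex set need not be onto). So the ``obstacle merely shifting'' is in fact the whole theorem shifting, unproved, onto $B^\ast$. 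Your first inclusion $\{\xi_\omega\}\subseteq\mathcal{D}$, via \cite[Theorem 22]{SZ} and Theorem \ref{kss all theorem}, is correct, but it is the easy half and is not where the difficulty of the statement lies.
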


\begin{rem}\label{constant} In fact, it is proved in \cite[Theorem 31]{SZ} and \cite[Lemma 20]{SZ} that for every fully  symmetric functional $\varphi$ on $\mathcal{M}_{1,\infty}$,
there exists a dilation invariant generalised limit $\omega$ such that for every $q>0$, $$(\omega\circ M)(\lambda\to\frac1{\lambda}\tau(e^{-(\lambda A)^{-q}}))=\Gamma (1+1/q)\varphi$$.
\end{rem}
In view of Theorems \ref{kss all theorem} to \ref{sz all theorem}, it is natural to ask whether the equality $\tau_{\omega}=\xi_{\omega}$ holds for an arbitrary dilation invariant generalised limit $\omega$. This is however not the case, see \cite[Theorem 37]{SZ} where examples of $\omega$'s are  given for which we have $\tau_{\omega}\neq \xi_{\omega}$.

Finally, we come to one of the major new notions introduced in this context in \cite{C} (see also
\cite{C1}) and generalised in \cite{LSS, CS, LS, SS, SUZ}:
\begin{dfn}\label{meas}
 The operator $A\in\mathcal{M}_{1,\infty}$ is said to be measurable if and only if the set
 $\{\tau_{\omega}(A):\ \tau_\omega\in\mathcal{D}\}$ consists of a single point.
\end{dfn}

In view of the previously cited results and counter-examples our main result, which we now state,
is not entirely expected. It answers Question \ref{CKS quest2} and complements and extends earlier results in \cite{C, CPS, CRSS}. It also provides a very short new proof of the main result from \cite{LSS} (see the proof of the implication ${\rm (i)}\Longrightarrow {\rm (ii)}$ below).
\begin{thm}\label{main eq teor} Let $A\in\mathcal{M}_{1,\infty}$ be a positive operator. The following conditions are equivalent.
\begin{enumerate}
\item[{\rm (i)}]\label{sx1} The operator $A$ is measurable.
\item[{\rm (ii)}]\label{sx2} The limit
$\lim_{t\to\infty}\frac1{\log(1+t)}\int_0^t\mu(s,A)ds$ exists.
\item[{\rm (iii)}]\label{sx3} The limit
$\lim_{\lambda\to\infty}M(\lambda\to \frac1{\lambda}\tau(e^{-(\lambda A)^{-1}}))$ exists.
\item[{\rm (iv)}]\label{sx4} The limit
$\lim_{s\to 0}s\tau(A^{1+s})$ exists.
\end{enumerate}
Furthermore, if any of the conditions (i)-(iv) above holds, then we have the coincidence of the three limits
$$\lim_{t\to\infty}\frac1{\log(1+t)}\int_0^t\mu(s,A)ds=\lim_{\lambda\to\infty}M(\lambda\to\frac1{\lambda}\tau(e^{-(\lambda A)^{-1}})) =\lim_{s\to 0}s\tau(A^{1+s})$$
with the value given by $\{\tau_{\omega}(A):\ \tau_\omega\in\mathcal{D}\}.$
\end{thm}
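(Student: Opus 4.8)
The plan is to regard (ii), (iii) and (iv) as the assertions that three explicit bounded functions converge, and to translate measurability into the statement that the corresponding value sets under generalised limits collapse to a point. Write
$$a(t)=\frac1{\log(1+t)}\int_0^t\mu(s,A)\,ds,\qquad h(\lambda)=\frac1\lambda\tau(e^{-(\lambda A)^{-1}}),\qquad z(r)=\frac1r\tau(A^{1+1/r}),$$
so that $\tau_\omega(A)=\omega(a)$, $\xi_\omega(A)=\omega(Mh)$, $\zeta_\gamma(A)=\gamma(z)$, and (ii), (iii), (iv) say that $a$, $Mh$, $z$ have ordinary limits at infinity. Put $V(A):=\{\varphi(A):\varphi\text{ fully symmetric}\}$. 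By Theorem \ref{kss all theorem} and the identification of $\mathcal D$ with the fully symmetric functionals, condition (i) is precisely the statement that $V(A)=\{\tau_\omega(A):\omega\}$ is a single point; moreover, combining Theorem \ref{sz all theorem} with the fact that each $\xi_\omega$ is fully symmetric gives $\{\xi_\omega(A):\omega\}=V(A)$ as well. The implications (ii)$\Rightarrow$(i) and (iii)$\Rightarrow$(i) are then immediate, since convergence of $a$ (respectively $Mh$) to a value $c$ forces $\omega(a)=c$ (respectively $\omega(Mh)=c$) for every generalised limit $\omega$, whence $V(A)=\{c\}$.

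For the reverse implications the argument splits according to which generalised limits are available. The implication (i)$\Rightarrow$(iv) is the cleanest: the value set $\{\gamma(z):\gamma\text{ an arbitrary generalised limit}\}$ of the continuous bounded function $z$ equals the whole interval $[\liminf_{r\to\infty}z,\ \limsup_{r\to\infty}z]$, while by Theorem \ref{zeta lin fs} this value set is contained in $V(A)=\{c\}$; hence the interval degenerates and $\lim z$ exists. The crux is (i)$\Rightarrow$(ii), where only \emph{dilation invariant} $\omega$ act on $a$. Passing to the logarithmic variable $u=\log t$ turns dilation invariance into translation invariance, and measurability says exactly that all translation invariant means of $u\mapsto a(e^u)$ coincide; by the Lorentz description of such means this is \emph{almost convergence}, which in particular yields convergence of the ordinary Ces\`aro means of $a(e^u)$ to $c$. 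The function $a(e^u)$ is slowly oscillating: since $\mu(\cdot,A)$ is decreasing one has $t\mu(t,A)=O(\log t)$, so $a(e^u)$ is Lipschitz in $u$ with bounded constant, and its oscillation over short windows is uniformly small. A Tauberian theorem of Schmidt type then upgrades Ces\`aro convergence to genuine convergence, giving (ii). Finally (i)$\Rightarrow$(iii) follows by composing (i)$\Rightarrow$(ii) with the Abelian implication (ii)$\Rightarrow$(iii) --- convergence of the logarithmic average of $\mu$ forces $Mh\to c$ --- which is the content of the heat-kernel estimates of \cite{CRSS,SZ} underlying \eqref{hk marc}.

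It remains to close the cycle with (iv)$\Rightarrow$(i). Here I would invoke the Karamata--Hardy--Littlewood Tauberian theorem, whose Tauberian hypothesis is furnished for free by the monotonicity of $\mu(\cdot,A)$: rewriting $z(r)=r^{-1}\int_0^\infty\mu(t,A)^{1+1/r}\,dt$ and letting $r\to\infty$ recovers $\lim_{t\to\infty}a(t)$, that is (ii), and hence (i). The coincidence of the three limits is then automatic, for when each limit exists it must equal the unique value of the corresponding functional on $A$, namely $\lim a=\tau_\omega(A)$, $\lim Mh=\xi_\omega(A)$ and $\lim z=\zeta_\gamma(A)$, and all three of these lie in the one-point set $V(A)$; the normalisation $\Gamma(1+1/1)=1$ in Remark \ref{constant} guarantees that no extraneous constant enters the heat-kernel value. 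I expect the main obstacle to be precisely the step (i)$\Rightarrow$(ii): measurability only delivers equality of all dilation invariant means, which is a priori strictly weaker than convergence, and the entire argument turns on extracting an honest limit from this almost convergence by means of the slow-oscillation estimate. Verifying slow oscillation --- in particular the bound $t\mu(t,A)=O(\log t)$ and the handling of the discrepancy between $\log(1+t)$ and $\log t$ in $a$ --- is the delicate point; the remaining implications are comparatively routine consequences of the representation Theorems \ref{kss all theorem}--\ref{sz all theorem} and classical Tauberian theory.
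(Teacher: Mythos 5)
Your architecture --- translating (i)--(iv) into statements about value sets via Theorems \ref{kss all theorem}--\ref{sz all theorem} and then upgrading equality of those sets to honest limits by Tauberian arguments --- is exactly the paper's strategy, and your implications (ii)$\Rightarrow$(i), (iii)$\Rightarrow$(i) and (i)$\Rightarrow$(iv) are essentially the paper's proofs. But the step you yourself identify as the crux, (i)$\Rightarrow$(ii), has a genuine gap: Lipschitz continuity of $b(u):=a(e^u)$ (``oscillation over short windows is uniformly small'') is \emph{not} a valid Tauberian condition for Ces\`aro convergence, nor even for almost convergence. Take $b$ to be a sum of tent functions of height $1$ and width $2$ centred at the points $u=2^n$: this $b$ is $1$-Lipschitz and almost convergent to $0$ (a window of length $T$ meets at most $\log_2T+2$ bumps, so all window averages are $O(T^{-1}\log T)$ uniformly in the left endpoint), yet $b(2^n)=1$ for all $n$, so $b$ does not converge. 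Schmidt's slow-oscillation condition for Ces\`aro summability concerns the oscillation over the \emph{multiplicative} windows $[u,\lambda u]$, $\lambda\to1^+$, and a bounded Lipschitz constant gives there only the useless bound ${\rm const}\cdot(\lambda-1)u$. What is true, and what the paper uses, is the one-sided Landau-type condition $u\,b'(u)\ge-{\rm const}$ of Lemma \ref{first tauberian lemma}: it holds for your $b$ because the term involving $\mu(e^u,A)$ in $b'(u)$ is nonnegative, while the only negative contribution comes from differentiating $1/\log(1+e^u)$ and is bounded in modulus by $\|A\|_{\mathcal{M}_{1,\infty}}/u$. The paper packages precisely this as Lemma \ref{second tauberian lemma}, applied to $z(t)=t\mu(t,A)$, whose positivity is the whole point. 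With that replacement your Lorentz/almost-convergence route does close up --- although the paper reaches Ces\`aro convergence more cheaply, by observing that $\gamma\circ M$ is a dilation invariant generalised limit (Remark \ref{cdix}) and invoking only Lemma \ref{gen lim lemma}, thereby avoiding any continuous analogue of Lorentz's theorem.

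A second, smaller gap: you obtain (i)$\Rightarrow$(iii) by composing (i)$\Rightarrow$(ii) with an alleged Abelian implication (ii)$\Rightarrow$(iii) attributed to ``the heat-kernel estimates underlying \eqref{hk marc}''. But \eqref{hk marc} is only a boundedness statement; it does not contain that implication, and no such Abelian theorem is available off the shelf. The paper instead proves (i)$\Rightarrow$(iii) directly by the same mechanism as above: Theorems \ref{kss all theorem} and \ref{sz all theorem} identify the heat kernel functionals with $\mathcal{D}$, so measurability forces $\xi_\omega(A)=C$ for every dilation invariant $\omega$; taking $\omega=\gamma\circ M$ gives $\gamma\bigl(M^2(\lambda\to\tfrac1\lambda\tau(e^{-(\lambda A)^{-1}}))\bigr)=C$ for every generalised limit $\gamma$, hence $M^2h\to C$ by Lemma \ref{gen lim lemma} (in your notation for $h$), and then $Mh\to C$ by Lemma \ref{second tauberian lemma}, whose hypothesis $Mh\in L_{\infty}(0,\infty)$ is exactly where \eqref{hk marc} enters. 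Finally, your Karamata route for (iv)$\Rightarrow$(i) is a plausible alternative to the paper's citation of \cite[Theorem 3.1]{CPS}, with positivity (rather than monotonicity) of $\mu$ supplying the Tauberian hypothesis, but as written it is only a sketch.
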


\begin{rem} Let $a_1=\tau_{\omega}(A)$ for every $\tau_\omega\in\mathcal{D}$ in $(i)$ in Theorem \ref{main eq teor}. Let $a_2,$ $a_3$ and $a_4$ be the limits in $(ii),$ $(iii)$ and $(iv)$ (respectively) in Theorem \ref{main eq teor}. For every $1\leq i,j\leq 4,$ we show in the proof of Theorem \ref{main eq teor} that $(i)\implies(j)$ and $a_i=a_j.$
\end{rem}

The results should be seen in the general context of the continuing study of the notion
 of measurable operators introduced in \cite{C1,C} and further
 elaborated in \cite{LSS, CS, LS}. The main interest
 remains in the following areas: (i) comparing various modifications
 of this notion with respect to various subsets of Dixmier traces
 (as a rule with additional properties of invariance), (ii) finding
 convenient descriptions of the set of self-adjoint measurable
 operators, and (iii) determining when a given self-adjoint measurable
 operator is Tauberian. We remark that this current note is related to
 progress on these directions which will appear in \cite{SS, SUZ},
 where it is shown that not every self-adjoint measurable operator is necessarily Tauberian (which
  is in stark contrast with the case of positive operators). It will also be shown in \cite{SUZ} that the
  notion of measurability as originally introduced by Connes in \cite{C1}
  and its version considered in \cite{LSS} actually coincide.

\noindent{\bf Acknowledgements}. This research was supported by the Australian
Research Council.
The authors thank Dima Zanin for numerous discussions and help in the preparation of this article. The idea to use Lemma \ref{second tauberian lemma} below belongs to him and the usage of this lemma has significantly simplified our original proofs. We also thank  Bruno Iochum for many discussions on the issues
surrounding the results of this note. The first named author thanks the Alexander von Humboldt Stiftung and colleagues at the University of M\"unster.

\section{Proof of the main result}

The following lemma is well-known. The proof can be found in e.g. \cite[Section 6.8]{Hardy}.
\begin{lem}\label{first tauberian lemma} Let $z\in L_{\infty}(0,\infty)$ be a positive differentiable function. If $tz'(t)\geq{\rm const}$ for every $t>0$, then the following implication holds
$$\lim_{t\to\infty}\frac1t\int_0^tz(s)ds=C\Longrightarrow\lim_{t\to\infty}z(t)=C.$$
\end{lem}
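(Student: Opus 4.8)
The plan is to treat this as a classical Ces\`aro Tauberian theorem and to exploit the one-sided bound $tz'(t)\ge{\rm const}$ as a slowly-decreasing condition that controls the oscillation of $z$ over short multiplicative windows. Enlarging the constant if necessary, I may assume it has the form $-M$ with $M\ge 0$, so that $z'(u)\ge -M/u$ for all $u>0$. Writing $\sigma(t)=\frac1t\int_0^t z(s)\,ds$ (finite for each $t$ since $z\in L_\infty$), the hypothesis reads $\sigma(t)\to C$ as $t\to\infty$, and the goal is to upgrade this Ces\`aro convergence to genuine convergence of $z$.

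First I would record the behaviour of windowed averages. For a fixed window parameter $h\in(0,1)$, the identity $\int_t^{t(1+h)}z(s)\,ds = t(1+h)\sigma(t(1+h))-t\sigma(t)$ gives
$$\frac1{th}\int_t^{t(1+h)}z(s)\,ds=\frac{(1+h)\sigma(t(1+h))-\sigma(t)}{h}\xrightarrow[t\to\infty]{}\frac{(1+h)C-C}{h}=C,$$
and symmetrically $\frac1{th}\int_{t(1-h)}^{t}z(s)\,ds\to C$ as $t\to\infty$, for each fixed $h$.

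Next I would use the Tauberian hypothesis to compare $z(t)$ with these averages. Since $z'(u)\ge -M/u$, for $s\in[t,t(1+h)]$ we have $z(s)-z(t)=\int_t^s z'(u)\,du\ge -M\log(s/t)\ge -M\log(1+h)$; averaging over this forward window yields $z(t)\le \frac1{th}\int_t^{t(1+h)}z(s)\,ds+M\log(1+h)$. Letting $t\to\infty$ and then $h\to 0^{+}$ gives $\limsup_{t\to\infty}z(t)\le C$. For the reverse inequality I would use the backward window: for $s\in[t(1-h),t]$ the same estimate gives $z(s)-z(t)\le -M\log(s/t)\le -M\log(1-h)$, whence $z(t)\ge \frac1{th}\int_{t(1-h)}^t z(s)\,ds+M\log(1-h)$; letting $t\to\infty$ and then $h\to 0^{+}$ gives $\liminf_{t\to\infty}z(t)\ge C$. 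Combining the two bounds yields $\lim_{t\to\infty}z(t)=C$.

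The only real subtlety, and the reason the forward and backward windows must be treated separately, is that the hypothesis supplies only a one-sided (lower) bound on $tz'(t)$. This lets me bound $z(t)$ from above via the forward window and from below via the backward window, but not interchangeably; a two-sided bound would make either window alone suffice. The remaining points, namely that $z\in L_\infty$ renders all the integrals well-defined and that the fixed-$h$ window limits may be evaluated before sending $h\to 0^{+}$, are routine.
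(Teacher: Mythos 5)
Your proof is correct: the windowed-average argument, using the one-sided condition $tz'(t)\ge -M$ to control $z$ over multiplicative windows $[t,t(1+h)]$ and $[t(1-h),t]$ and then letting $h\to 0^{+}$, is precisely the classical proof of this Tauberian theorem, and it is essentially the argument in Hardy's \emph{Divergent Series}, \S 6.8 --- which is all the paper itself offers, since the lemma is stated there with a citation rather than a proof. One pedantic repair: for a function that is merely differentiable the identity $z(s)-z(t)=\int_t^s z'(u)\,du$ can fail (a derivative need not be locally integrable), but the inequality you actually use, $z(s)-z(t)\ge -M\log(s/t)$, follows directly from the mean value theorem applied to $u\mapsto z(u)+M\log u$, so your argument stands as written.
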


The following lemma is also well-known. Due to the lack of a suitable reference  we provide a short proof for convenience of the reader.

\begin{lem}\label{gen lim lemma} Let $x\in L_{\infty}(0,\infty)$ and let $a\in\mathbb{R}.$ The following conditions are equivalent.
\begin{enumerate}
\item\label{gen lim lemma1}  We have the bounds$$\liminf_{t\to\infty}x(t)\leq a\leq\limsup_{t\to\infty}x(t).$$
\item\label{gen lim lemma2} There exists a generalised limit $\gamma$ such that $\gamma(x)=a.$
\end{enumerate}
\end{lem}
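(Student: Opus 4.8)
The plan is to prove the two implications separately; the implication (\ref{gen lim lemma2})$\Rightarrow$(\ref{gen lim lemma1}) is routine, while the converse rests on a Hahn--Banach extension. Throughout, $\mathbf 1$ denotes the constant function equal to $1$, and all occurrences of $\limsup$ and $\liminf$ are understood in the essential sense (modulo null sets), which is the natural reading for $x\in L_\infty(0,\infty)$.

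For (\ref{gen lim lemma2})$\Rightarrow$(\ref{gen lim lemma1}), set $L=\limsup_{t\to\infty}x(t)$. Given $\varepsilon>0$, choose $T$ so that $x(t)\le L+\varepsilon$ for almost all $t>T$. Then the function $(L+\varepsilon)\mathbf 1-x$ is nonnegative outside $(0,T]$, so it is the sum of a nonnegative function and a function of compact support. Since $\gamma$ is positive, satisfies $\gamma(\mathbf 1)=1$, and vanishes on compactly supported functions, applying $\gamma$ gives $(L+\varepsilon)-a\ge 0$. Letting $\varepsilon\to 0$ yields $a\le L$, and the symmetric argument applied to $x-(\ell-\varepsilon)\mathbf 1$, with $\ell=\liminf_{t\to\infty}x(t)$, gives $a\ge \ell$.

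For (\ref{gen lim lemma1})$\Rightarrow$(\ref{gen lim lemma2}), I would construct $\gamma$ by Hahn--Banach. Take the sublinear functional $p(y):=\limsup_{t\to\infty}y(t)$ on $L_\infty(0,\infty)$; it is subadditive and positively homogeneous. On the subspace spanned by $\mathbf 1$ and $x$ define the linear functional $\gamma_0(\alpha\mathbf 1+\beta x)=\alpha+\beta a$. The domination $\gamma_0\le p$ on this subspace amounts to $\beta a\le p(\beta x)$ for all $\beta\in\mathbb R$: for $\beta>0$ this reads $a\le\limsup x$, and for $\beta<0$, dividing by $\beta$, it reads $a\ge\liminf x$; both hold by hypothesis (\ref{gen lim lemma1}). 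The real Hahn--Banach theorem then extends $\gamma_0$ to a linear functional $\gamma$ on all of $L_\infty(0,\infty)$ with $\gamma\le p$ everywhere.

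It remains to check that $\gamma$ is a generalised limit with $\gamma(x)=a$, the latter being immediate from $\gamma|_{\mathrm{span}\{\mathbf 1,x\}}=\gamma_0$. Positivity follows since for $y\ge 0$ we have $\gamma(-y)\le p(-y)\le 0$, whence $\gamma(y)\ge 0$; normalisation follows from $\gamma(\mathbf 1)=\gamma_0(\mathbf 1)=1$ together with the bound $|\gamma(y)|\le\|y\|_\infty$ coming from $\gamma\le p\le\|\cdot\|_\infty$; and $\gamma$ vanishes on any compactly supported $y$ because then $p(y)=p(-y)=0$ forces $0\le\gamma(y)\le 0$. The only real subtlety is the verification of the domination inequality on $\mathrm{span}\{\mathbf 1,x\}$, where one must track the sign of $\beta$ so that $\limsup$ correctly turns into $\liminf$; this is precisely the point at which both halves of the two-sided bound in (\ref{gen lim lemma1}) are used.
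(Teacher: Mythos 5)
Your proof is correct and follows essentially the same route as the paper's: define $\gamma_0(\alpha\mathbf 1+\beta x)=\alpha+\beta a$ on $\mathbb{R}+x\mathbb{R}$, dominate it by the sublinear functional $\limsup_{t\to\infty}$, and extend via Hahn--Banach. The paper leaves the domination inequality and the verification that the extension is a generalised limit as ``clear,'' whereas you spell them out; this is merely a difference in detail, not in method.
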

\begin{proof} The implication $\eqref{gen lim lemma2}\to\eqref{gen lim lemma1}$ follows immediately from the definition of the generalised limit.

In order to prove the implication $\eqref{gen lim lemma1}\to\eqref{gen lim lemma2},$ define a functional $\gamma$ on $\mathbb{R}+x\mathbb{R}$ by setting $\gamma(\alpha+\beta x)=\alpha+\beta a.$ Clearly,
$$\gamma(z)\leq\limsup_{t\to\infty}z(t),\quad z\in\mathbb{R}+x\mathbb{R}.$$
The assertion follows now from the Hahn-Banach theorem.
\end{proof}

Our next lemma plays an important role in the proof of our main result.

\begin{lem}\label{second tauberian lemma} Let $z$ be a positive locally integrable function on $(0,\infty)$. If $Mz\in L_{\infty}(0,\infty),$ then we have
$$\lim_{t\to\infty}(M^2z)(t)=C\Longrightarrow\lim_{t\to\infty}(Mz)(t)=C.$$
\end{lem}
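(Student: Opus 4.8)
The plan is to reduce the statement to Lemma \ref{first tauberian lemma} by means of the logarithmic change of variables that converts the multiplicative Ces\`aro operator $M$ into the ordinary additive Ces\`aro average occurring in that lemma. For a function $f$ on $(0,\infty)$ set $\tilde f(t):=f(e^t)$. Substituting $s=e^u$ (so that $ds/s=du$) and writing $\nu=e^t$ in the defining formula \eqref{Cesaro}, one checks directly that
$$(Mf)(e^t)=\frac1t\int_0^t f(e^u)\,du=\frac1t\int_0^t\tilde f(u)\,du,$$
so that $\widetilde{Mf}=C_1\tilde f$, where $C_1$ denotes the additive average $(C_1g)(t)=\frac1t\int_0^t g(u)\,du$. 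Note that the lower limit $s=1$ in $M$ corresponds exactly to $u=0$, so this matches the $\int_0^t$ appearing in Lemma \ref{first tauberian lemma} with no correction term. Setting $y:=Mz$, we then have $\tilde y=C_1\tilde z$ and $\widetilde{M^2z}=\widetilde{My}=C_1\tilde y$.

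Under this substitution the hypothesis $\lim_{t\to\infty}(M^2z)(t)=C$ becomes $\lim_{t\to\infty}(C_1\tilde y)(t)=C$, while the desired conclusion $\lim_{t\to\infty}(Mz)(t)=C$ becomes $\lim_{t\to\infty}\tilde y(t)=C$. Thus it suffices to verify the hypotheses of Lemma \ref{first tauberian lemma} for $\tilde y$. Since $z$ is positive, $(Mz)(\nu)\ge0$ for $\nu>1$, so $\tilde y$ is positive on $(0,\infty)$; and since $Mz\in L_\infty(0,\infty)$ by assumption, $\tilde y$ is bounded. The remaining point is the one-sided Tauberian condition $t\,\tilde y'(t)\ge{\rm const}$.

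The key computation is as follows. From $\tilde y(t)=\frac1t\int_0^t\tilde z(u)\,du$ we obtain
$$t\,\tilde y'(t)=\tilde z(t)-\frac1t\int_0^t\tilde z(u)\,du=\tilde z(t)-\tilde y(t),$$
valid almost everywhere, since $\tilde z$ is locally integrable and $\tilde y$ is therefore absolutely continuous. Because $\tilde z\ge0$ while $\tilde y(t)\le\|Mz\|_\infty$ for all $t$, this yields
$$t\,\tilde y'(t)\ge-\|Mz\|_\infty,$$
which is precisely the one-sided bound demanded by Lemma \ref{first tauberian lemma}, the constant there being allowed to be negative. Applying that lemma to $\tilde y$ gives $\lim_{t\to\infty}\tilde y(t)=C$, which is the claim after reversing the substitution.

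The \emph{main obstacle} is the regularity bookkeeping: Lemma \ref{first tauberian lemma} is phrased for differentiable functions, whereas $\tilde y=C_1\tilde z$ is in general only absolutely continuous. This is harmless, because the proof of Lemma \ref{first tauberian lemma} relies solely on the one-sided derivative estimate, which holds almost everywhere; one invokes the lemma in this almost-everywhere form, or equivalently notes that $\tilde y$ is continuous and its a.e.\ derivative satisfies the required bound, so the argument of Lemma \ref{first tauberian lemma} applies verbatim.
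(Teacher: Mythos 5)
Your proof is correct and is essentially the paper's own argument: the paper likewise sets $x=(Mz)\circ\exp$, uses the same logarithmic substitution to turn $M$ into the additive Ces\`aro mean, computes $tx'(t)=z(e^t)-(Mz)(e^t)\ge -\|Mz\|_{\infty}$, and concludes via Lemma \ref{first tauberian lemma}. Your explicit treatment of the almost-everywhere differentiability of $\tilde y$ is a minor bookkeeping point the paper glosses over; otherwise the two arguments are identical.
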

\begin{proof} Set $x=(Mz)\circ\exp$. We have
$$(M^2z)(t)=\frac1{\log(t)}\int_1^t(Mz)(u)\frac{du}{u}{=}\frac1{\log(t)}\int_0^{\log(t)}x(s)ds,$$
where we used the substitution ${u=e^s}$ in the second equality.
By the assumption, we have
$$\lim_{t\to\infty}\frac1t\int_0^tx(s)ds=C.$$
Let us now verify that the function $t\to tx'(t)$ satisfies the assumption of Lemma \ref{first tauberian lemma}. We have
$$tx'(t)=t(\frac1t\int_0^{e^t}z(s)\frac{ds}{s})'=-\frac1t\int_0^{e^t}z(s)\frac{ds}{s}+z(e^t).$$
Since $z$ is positive, we have $tx'(t)\geq -(Mz)(e^t)$ and since $Mz\in L_{\infty}(0,\infty)$, we conclude $tx'(t)\geq{\rm const}$. By Lemma \ref{first tauberian lemma}, we have $\lim_{t\to\infty}x(t)=C$ and hence $\lim_{t\to\infty}(Mz)(t)=C$.
\end{proof}

The following remark is well known and can be found in e.g. \cite{C}.

\begin{rem}\label{cdix}
For every generalised limit $\gamma,$ the state $\gamma\circ M$ is a dilation invariant generalised limit.
\end{rem}

With these preliminary results in hand we come to the proof of our main result.
\begin{proof} (Of  Theorem \ref{main eq teor}.) First, the implication ${\rm (ii)}\Longrightarrow{\rm (i)}$ follows from the definition of $\tau_{\omega}.$
Next, the implication ${\rm (i)}\Longrightarrow {\rm (ii)}$ was first proved  in \cite[Theorem 6.6]{LSS} (see also \cite{CS}). We provide here a new (very short and straightforward) proof.

Let
$$C:=\tau_{\omega}(A),\ \mbox{for all } \tau_\omega\in\mathcal{D}.$$
In particular, by Remark \ref{cdix}, we have $\tau_{\gamma\circ M}(A)=C$ for every generalised limit $\gamma$. That is, we have the equality
$$(\gamma\circ M)(t\to\frac1{\log(1+t)}\int_0^t\mu(s,A)ds)=C,$$
which, due to Lemma \ref{gen lim lemma}, guarantees
\begin{equation}\label{gfrew1}
\lim_{t\to\infty}M(t\to\frac1{\log(1+t)}\int_0^t\mu(s,A)ds)=C.
\end{equation}
Set $z(t):=t\mu(t,A)$. Observe that $z$ is a positive measurable, but not necessarily bounded function. However, since $A\in\mathcal{M}_{1,\infty}$, the function
$$t\to (Mz)(t)=\frac1{\log(t)}\int_1^t\mu(s,A)ds$$
is bounded. Thus, $Mz\in L_{\infty}(0,\infty)$ and obviously
\begin{equation}\label{gfrew2}
\lim_{t\to\infty}(Mz)(t)-\frac1{\log(1+t)}\int_0^t\mu(s,A)=0.
\end{equation}
Combining \eqref{gfrew1} and \eqref{gfrew2}, and using the (obvious) fact that $\lim_{t\to\infty}(My)(t)=0$ whenever $y\in L_\infty(0,\infty)$ satisfies $\lim_{t\to\infty}y(t)=0$,  we infer that
$$\lim_{t\to\infty}(M^2z)(t)=C.$$
By Lemma \ref{second tauberian lemma}, we obtain from the preceding equality
$$\lim_{t\to\infty}(Mz)(t)=C$$
and the proof of the implication is completed by referring to \eqref{gfrew2}.

${\rm (iii)}\Longrightarrow {\rm (i)}$. Let $C$ be the limit in {\rm (iii)}. By definition of $\xi_{\omega}$, we have $\xi_{\omega}(A)=C$ for every dilation invariant generalised limit $\omega$.
By Theorems \ref{kss all theorem} and \ref{sz all theorem}, the class $\mathcal{D}$ coincides with the class of all heat kernel functionals and so, we also have $\tau_{\omega}(A)=C$ for every $\tau_{\omega}\in  \mathcal{D}$ and the proof of the implication is completed.

${\rm (i)}\Longrightarrow {\rm (iii)}$. Suppose that $\tau_{\omega}(A)=C$ for every $\tau_{\omega}\in  \mathcal{D}$. Then the same argument as above shows that $\xi_{\omega}(A)=C$ for every dilation invariant generalised limit $\omega$. In particular, due to Remark \ref{cdix}, we have $\xi_{\gamma\circ M}(A)=C$ for every generalised limit $\gamma$. That is,
$$(\gamma\circ M^2)(\lambda\to\frac1{\lambda}\tau(e^{-(\lambda A)^{-1}}))=C.$$
It follows from Lemma \ref{gen lim lemma} that
$$\lim_{\lambda\to\infty}M^2(\lambda\to \frac1{\lambda}\tau(e^{-(\lambda A)^{-1}}))=C.$$
Due to \eqref{hk marc}, we know that the mapping $\lambda\to M(\lambda\to\frac1{\lambda}\tau(e^{-(\lambda A)^{-1}}))$ is bounded and therefore
the proof of the implication is completed by invoking Lemma \ref{second tauberian lemma}.

${\rm (i)}\Longrightarrow {\rm (iv)}$. Suppose that $\tau_{\omega}(A)=C$ for every $\tau_{\omega}\in  \mathcal{D}$. It follows from Theorems \ref{zeta lin fs} and \ref{kss all theorem} that the class of all $\zeta-$function residues is a subclass of $\mathcal{D}$. Hence, for every generalised limit $\gamma,$ we have
$$\gamma(t\to\frac1t\tau(A^{1+1/t}))=C.$$
An appeal to Lemma \ref{second tauberian lemma} completes the proof of the implication.

Finally, the implication ${\rm (iv)}\Longrightarrow{\rm (i)}$ is established in \cite[Theorem 3.1]{CPS}.
\end{proof}

Our methods have a further interesting consequence.
 Repeating the argument $(i)\Longrightarrow(iii)$ in Theorem \ref{main eq teor} verbatim (and using Remark \ref{constant} instead of Theorem \ref{sz all theorem}), we obtain the following result.
\begin{prop}\label{obyasn prop} For every measurable positive operator $A\in\mathcal{M}_{1,\infty}$ and every $q>0,$ we have
$$\lim_{\lambda\to\infty}M(\lambda\to\frac1{\lambda}\tau(e^{-(\lambda A)^{-q}}))=\Gamma(1+\frac1q)\lim_{s\to0}s\tau(A^{1+s}).$$
\end{prop}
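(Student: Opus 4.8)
The plan is to transcribe the proof of the implication ${\rm (i)}\Longrightarrow{\rm (iii)}$ of Theorem \ref{main eq teor}, replacing the heat kernel functional $\xi_\omega$ by its $q$-analogue
$$\xi^{(q)}_\omega(A):=(\omega\circ M)(\lambda\to\tfrac1\lambda\tau(e^{-(\lambda A)^{-q}})),$$
and replacing the identification ``$\mathcal{D}=\{\xi_\omega\}$'' (Theorems \ref{kss all theorem} and \ref{sz all theorem}) by Remark \ref{constant}. Throughout I write $C:=\lim_{s\to 0}s\tau(A^{1+s})$, which by Theorem \ref{main eq teor} is exactly the common value $\tau_\omega(A)$ taken by every $\tau_\omega\in\mathcal{D}$ on the measurable operator $A$; the target is then $\lim_{\lambda\to\infty}M(\lambda\to\tfrac1\lambda\tau(e^{-(\lambda A)^{-q}}))=\Gamma(1+\tfrac1q)C$. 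Note that for $q=1$ one has $\Gamma(1+\tfrac1q)=\Gamma(2)=1$, so the statement does reduce to condition ${\rm (iii)}$ in that case.

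The first step is to show that $\xi^{(q)}_\omega(A)=\Gamma(1+\tfrac1q)C$ for every dilation invariant generalised limit $\omega$. This rests on the $q$-analogue of the identification used in ${\rm (i)}\Longrightarrow{\rm (iii)}$: one inclusion is Remark \ref{constant}, by which every fully symmetric functional arises as $\tfrac1{\Gamma(1+1/q)}\xi^{(q)}_\omega$ for a suitable $\omega$; the reverse inclusion is the full symmetry of each $\xi^{(q)}_\omega$ — the general-$q$ extension of \cite[Theorem 22]{SZ} — which, after the normalisation singled out by Remark \ref{constant}, places $\tfrac1{\Gamma(1+1/q)}\xi^{(q)}_\omega$ in $\mathcal{D}$. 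Together these give $\{\tfrac1{\Gamma(1+1/q)}\xi^{(q)}_\omega\}=\mathcal{D}$, so measurability of $A$ forces $\tfrac1{\Gamma(1+1/q)}\xi^{(q)}_\omega(A)=C$, that is $\xi^{(q)}_\omega(A)=\Gamma(1+\tfrac1q)C$, for every dilation invariant $\omega$.

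The second step is the Tauberian argument, carried over verbatim. For an arbitrary generalised limit $\gamma$, Remark \ref{cdix} makes $\gamma\circ M$ a dilation invariant generalised limit, so the first step gives $\xi^{(q)}_{\gamma\circ M}(A)=\Gamma(1+\tfrac1q)C$; unwinding the definition, this reads $(\gamma\circ M^2)(\lambda\to\tfrac1\lambda\tau(e^{-(\lambda A)^{-q}}))=\Gamma(1+\tfrac1q)C$. As this holds for every $\gamma$, Lemma \ref{gen lim lemma} yields $\lim_{\lambda\to\infty}M^2(\lambda\to\tfrac1\lambda\tau(e^{-(\lambda A)^{-q}}))=\Gamma(1+\tfrac1q)C$. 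The map $\lambda\to M(\lambda\to\tfrac1\lambda\tau(e^{-(\lambda A)^{-q}}))$ is bounded — the general-$q$ counterpart of \eqref{hk marc}, already implicit in the well-definedness of $\xi^{(q)}_\omega$ invoked in Remark \ref{constant} — so Lemma \ref{second tauberian lemma} (applied to $z(\lambda)=\tfrac1\lambda\tau(e^{-(\lambda A)^{-q}})$) removes one copy of $M$ and gives $\lim_{\lambda\to\infty}M(\lambda\to\tfrac1\lambda\tau(e^{-(\lambda A)^{-q}}))=\Gamma(1+\tfrac1q)C=\Gamma(1+\tfrac1q)\lim_{s\to 0}s\tau(A^{1+s})$.

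I expect the main obstacle to be the reverse inclusion in the first step. Remark \ref{constant} only produces, for each prescribed value, \emph{one} $\omega$ realising it, whereas the Tauberian argument needs the value of $\xi^{(q)}_\omega(A)$ to be the same constant $\Gamma(1+\tfrac1q)C$ for \emph{all} $\omega$ of the form $\gamma\circ M$. Pinning this down is precisely what requires knowing that every $\tfrac1{\Gamma(1+1/q)}\xi^{(q)}_\omega$ is itself fully symmetric, i.e. the general-$q$ version of \cite[Theorem 22]{SZ}, together with the boundedness of the $M$-averaged $q$-heat trace. Once the identification $\{\tfrac1{\Gamma(1+1/q)}\xi^{(q)}_\omega\}=\mathcal{D}$ is in hand, the remainder is a routine transcription of the $q=1$ argument, with the single scalar $\Gamma(1+\tfrac1q)$ threaded through each step.
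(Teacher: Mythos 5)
Your proposal is correct and takes essentially the same route as the paper: the paper's own proof is precisely the instruction to repeat the argument ${\rm (i)}\Longrightarrow{\rm (iii)}$ of Theorem \ref{main eq teor} verbatim, substituting Remark \ref{constant} for Theorem \ref{sz all theorem}, which is exactly what you carry out, constant $\Gamma(1+\frac1q)$ included. The ``obstacle'' you flag (full symmetry of the normalised $q$-heat kernel functionals, i.e.\ the general-$q$ form of \cite[Theorem 22]{SZ}, together with the $q$-analogue of \eqref{hk marc}) is precisely the content the paper delegates to \cite{SZ} through Remark \ref{constant}, so it is not a gap relative to the paper's own argument.
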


\section{Answering Question \ref{CKS quest1}}

The following corollary answers Question \ref{CKS quest1}. Its proof immediately follows from the implication ${\rm (iii)}\Longrightarrow{\rm (ii)}$ established in Theorem \ref{main eq teor}.

\begin{cor} \label{q2} Let $A\in\mathcal{L}_{1,\infty}$ be a positive operator. If  the limit
$$\lim_{\lambda\to\infty}\frac1{\lambda}\tau(e^{-(\lambda A)^{-1}})$$
exists then so does the limit
$$\lim_{t\to\infty}\frac1{\log(1+t)}\int_0^t\mu(s,A)ds.$$
\end{cor}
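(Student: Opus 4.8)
The plan is to reduce the corollary to the implication ${\rm (iii)}\Longrightarrow{\rm (ii)}$ of Theorem \ref{main eq teor}. This is legitimate because $\mathcal{L}_{1,\infty}$ is a subspace of $\mathcal{M}_{1,\infty}$, so a positive $A\in\mathcal{L}_{1,\infty}$ is in particular a positive element of $\mathcal{M}_{1,\infty}$, to which the main theorem applies. The only gap to bridge is that the hypothesis here concerns the \emph{plain} limit $\lim_{\lambda\to\infty}\frac1\lambda\tau(e^{-(\lambda A)^{-1}})$, whereas condition (iii) demands the existence of the limit of the \emph{Ces\`aro-averaged} function $M(\lambda\to\frac1\lambda\tau(e^{-(\lambda A)^{-1}}))$. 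Thus the heart of the matter is to show that, for $A$ in the smaller space $\mathcal{L}_{1,\infty}$, existence of the plain limit already forces existence of the $M$-averaged limit, with the same value.

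First I would invoke the defining feature of $\mathcal{L}_{1,\infty}$. By \eqref{hk weakl1}, the positive function
$$f(\lambda):=\frac1\lambda\tau(e^{-(\lambda A)^{-1}})$$
satisfies $\sup_\lambda f(\lambda)<\infty$, so $f\in L_\infty(0,\infty)$. This boundedness is exactly the property that distinguishes $\mathcal{L}_{1,\infty}$ from the larger space $\mathcal{M}_{1,\infty}$, where \eqref{hk weakl1} may fail, and it is what makes the corollary true in this restricted setting.

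Next I would use the elementary fact that the multiplicative Ces\`aro operator $M$ preserves limits of bounded functions: if $f\in L_\infty(0,\infty)$ and $\lim_{\lambda\to\infty}f(\lambda)=C$, then $\lim_{\nu\to\infty}(Mf)(\nu)=C$ as well. The cleanest way to see this is the substitution $s=e^u$ in \eqref{Cesaro}, which turns $(Mf)(\nu)=\frac1{\log(\nu)}\int_1^\nu f(s)\frac{ds}{s}$ into the ordinary Ces\`aro mean $\frac1{\log(\nu)}\int_0^{\log(\nu)}f(e^u)\,du$ of the bounded function $u\mapsto f(e^u)$, whose limit at $\infty$ is again $C$; the classical Ces\`aro theorem then closes this step. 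Applying this to the hypothesis produces exactly condition (iii) of Theorem \ref{main eq teor}.

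Finally, condition (iii) together with the equivalences of Theorem \ref{main eq teor} yields condition (ii), that is, the existence of $\lim_{t\to\infty}\frac1{\log(1+t)}\int_0^t\mu(s,A)ds$, which is the desired conclusion. I do not anticipate a genuine obstacle: the argument is short, and the only point requiring care is to record explicitly that the passage from the plain limit to the $M$-averaged limit rests on the boundedness in \eqref{hk weakl1}. This is precisely why the hypothesis $A\in\mathcal{L}_{1,\infty}$, rather than the weaker $A\in\mathcal{M}_{1,\infty}$, is indispensable.
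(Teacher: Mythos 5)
Your proposal is correct and takes essentially the same route as the paper: the paper's proof of Corollary \ref{q2} simply cites the implication ${\rm (iii)}\Longrightarrow{\rm (ii)}$ of Theorem \ref{main eq teor}, treating the passage from the plain limit to the $M$-averaged limit as immediate (the same ``obvious'' step appears again in the corollary following Example \ref{ex}), and this is exactly the step you make explicit via the boundedness from \eqref{hk weakl1} and the classical Ces\`aro theorem after the substitution $s=e^u$. The only quibble is your closing claim that $A\in\mathcal{L}_{1,\infty}$ is indispensable for this step: since $\lambda\mapsto\tau(e^{-(\lambda A)^{-1}})$ is nondecreasing, existence of a finite plain limit already forces local and eventual boundedness of the averaged function, so the hypothesis is a natural sufficient condition rather than a logically necessary one for your argument.
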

The following example shows that the converse to Corollary \ref{q2} does not hold. 

\begin{ex}\label{ex} There exists a positive operator $A\in\mathcal{L}_{1,\infty}$ such that
\begin{equation}\label{0}\lim_{t\to\infty}\frac1{\log(1+t)}\int_0^t\mu(s,A)ds=0\end{equation}
and
\begin{equation}\label{e}
\limsup_{t\to\infty}\frac1{\lambda}\tau(e^{-(\lambda A)^{-1}})>0.
\end{equation}
\end{ex}
\begin{proof} Define a positive operator $A$ by setting
$$\mu(s,A)=
\begin{cases}
s^{-1},\qquad s\in(e^{e^n},ne^{e^n}),\ n\geq1\\
e^{-e^{n+1}},\quad s\in(ne^{e^n},e^{e^{n+1}}),\ n\geq1\\
e^{-e},\qquad s\in(0,e^e).
\end{cases}
$$

For every $n\geq1,$ we have
$$\int_0^{e^{e^{n+1}}}\mu(s,A)ds=1+\sum_{k=1}^n\left(\int_{e^{e^k}}^{ke^{e^k}}\mu(s,A)ds+\int_{ke^{e^k}}^{e^{e^{k+1}}}\mu(s,A)ds\right)=$$
$$=1+\sum_{k=1}^n\left(\log(k)+1-(k+1)e^{-(e-1)e^k}\right)=n+\log(n!)+O(1)=O(n\log(n)).$$
Here, the last equality follows from Stirling's formula
$$
n!=\sqrt{2\pi n}\left(\frac{n}e\right)^ne^{\frac{\theta}{12n}},\quad 0<\theta<1.$$

For every $t>e,$ let $\nu=\nu(t)=[\log(\log(t))].$ It follows that
$$\int_0^t\mu(s,A)ds\leq\int_0^{e^{e^{\nu+1}}}\mu(s,A)ds=O(\nu\log(\nu))=o(\log(t)),$$
which yields \eqref{0}.

On the other hand, we have
$$\frac1{\lambda}\tau(e^{-(\lambda A)^{-1}})\geq\frac1{\lambda}\sum_{n=1}^{\infty}\int_{e^{e^n}}^{ne^{e^n}}e^{-\lambda^{-1}s}ds=\sum_{n=1}^{\infty}e^{-\lambda^{-1}e^{e^n}}-e^{-n\lambda^{-1}e^{e^n}}.$$
For a given $n\in\mathbb{N},$ set $\lambda=e^{e^n}.$ It follows that
$$\frac1{\lambda}\tau(e^{-(\lambda A)^{-1}})\geq e^{-\lambda^{-1}e^{e^n}}-e^{-n\lambda^{-1}e^{e^n}}=e^{-1}-e^{-n}.$$
Therefore,
$$\limsup_{\lambda\to\infty}\frac1{\lambda}\tau(e^{-(\lambda A)^{-1}})\geq e^{-1},$$
yielding \eqref{e}.
\end{proof}

This example has a further interesting consequence.
\begin{cor}
The limit
\begin{equation}\label{e1}
\lim_{t\to\infty}\frac1{\lambda}\tau(e^{-(\lambda A)^{-1}})
\end{equation}
does not exist and hence we cannot omit $M$ in Theorem \ref{main eq teor}.
\end{cor}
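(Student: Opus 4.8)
The plan is to reuse the operator $A$ already constructed in Example \ref{ex} and to play its two properties against each other through the equivalences of Theorem \ref{main eq teor}. First I would observe that, by \eqref{0}, this $A$ satisfies condition ${\rm (ii)}$ of Theorem \ref{main eq teor} with limiting value $0$; hence $A$ is measurable, and invoking the equivalence of ${\rm (ii)}$ and ${\rm (iii)}$ together with the coincidence of the three limits asserted in that theorem, the $M$-averaged heat kernel limit exists and equals $0$:
$$\lim_{\lambda\to\infty}M\Bigl(\lambda\to\frac1{\lambda}\tau(e^{-(\lambda A)^{-1}})\Bigr)=0.$$

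Next I would argue by contradiction. Suppose the bare limit \eqref{e1} existed, with some value $L$. Here I would invoke the elementary fact (already used in the proof of Theorem \ref{main eq teor}) that the Cesaro operator $M$ preserves limits: if $y\in L_\infty(0,\infty)$ and $\lim_{t\to\infty}y(t)=L$, then $\lim_{t\to\infty}(My)(t)=L$ as well. This follows by decomposing $y=(y-L)+L$, noting that $M$ fixes the constant $L$ (since $\frac1{\log\nu}\int_1^\nu L\,\frac{ds}{s}=L$) and applying the vanishing statement to $y-L$. Applying this to $y(\lambda)=\frac1{\lambda}\tau(e^{-(\lambda A)^{-1}})$ would force $L$ to coincide with the $M$-averaged limit computed above, so that $L=0$.

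Finally I would derive the contradiction from \eqref{e}: since $\limsup_{\lambda\to\infty}\frac1{\lambda}\tau(e^{-(\lambda A)^{-1}})\geq e^{-1}>0$, the existence of the bare limit would force $L\geq e^{-1}$, which is incompatible with $L=0$. Hence the limit \eqref{e1} does not exist. The assertion that $M$ cannot be omitted in Theorem \ref{main eq teor} then follows at once: the operator $A$ is measurable (conditions ${\rm (i)}$, ${\rm (ii)}$ and ${\rm (iv)}$ all hold), yet the un-averaged heat kernel trace fails to converge, so replacing condition ${\rm (iii)}$ by the existence of \eqref{e1} would break the equivalence.

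I do not expect a genuine technical obstacle here, as all the analytic content is already packaged into Example \ref{ex} and Theorem \ref{main eq teor}. The only point demanding care is the bookkeeping of the two limiting values, namely contrasting the $M$-averaged limit $0$ with the strictly positive $\limsup$ of the bare heat kernel trace; everything else is a direct consequence of the limit-preserving property of $M$.
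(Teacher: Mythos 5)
Your proposal is correct and follows essentially the same route as the paper: both arguments combine the limit-preserving property of the Ces\`aro operator $M$ with the equivalences of Theorem \ref{main eq teor} to pin the putative limit of \eqref{e1} to the value $0$ coming from \eqref{0}, and then contradict \eqref{e}. The only cosmetic difference is that you invoke the direction ${\rm (ii)}\Longrightarrow{\rm (iii)}$ to compute the $M$-averaged limit first, whereas the paper assumes the bare limit exists and runs ${\rm (iii)}\Longrightarrow{\rm (ii)}$; the logical content is identical.
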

\begin{proof}
Suppose that the limit in \eqref{e1} exists and is equal to $c$. Then, obviously
$$\lim_{\lambda\to\infty}M(\lambda\to\frac1{\lambda}\tau(e^{-(\lambda A)^{-1}}))=c$$
and by Theorem \ref{main eq teor}, we obtain that $$\lim_{t\to\infty}\frac1{\log(1+t)}\int_0^t\mu(s,A)ds =c.$$
It follows from \eqref{0} that $c=0$. Thus, we should then have that the limit in \eqref{e1} is $0$.
However, the latter contradicts \eqref{e}.
\end{proof}

Finally we see that this example demonstrates that we are not able to claim
any meromorphic continuation property for the zeta function on the basis of our
results to this point.

\begin{lem}\label{mero} For the operator $A$ constructed in Example \ref{ex}, the $\zeta$-function $s\to \tau(A^{1+s})$ does not have a pole or a removable singularity at $0$.
\end{lem}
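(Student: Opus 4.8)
The plan is to exploit the identity $\tau(A^{1+s})=\int_0^\infty\mu(t,A)^{1+s}\,dt$, valid for $s>0$ because $\mu(t,A^{1+s})=\mu(t,A)^{1+s}$ for positive $A$, and then to control the two extreme possible behaviours of $\zeta_A(s):=\tau(A^{1+s})$ as $s\to 0^+$. Recall that the singularity at $0$ would be removable precisely when $\lim_{s\to0^+}\zeta_A(s)$ is finite, and would be a pole of some order $k\geq1$ precisely when $\lim_{s\to0^+}s^k\zeta_A(s)$ exists and is nonzero. I would rule out each of these in turn.

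First I would rule out all poles. The operator $A$ of Example \ref{ex} is positive and lies in $\mathcal{L}_{1,\infty}\subset\mathcal{M}_{1,\infty}$, so Theorem \ref{main eq teor} applies. By \eqref{0}, condition (ii) of that theorem holds with limiting value $0$; the ``furthermore'' clause then forces condition (iv) to hold with the \emph{same} value, so that $\lim_{s\to0^+}s\,\tau(A^{1+s})=0$. If $\zeta_A$ had a pole of order $k=1$ this limit would be a nonzero constant, and if it had a pole of order $k\geq2$ then $s\zeta_A(s)=s^{1-k}\bigl(s^k\zeta_A(s)\bigr)$ would tend to $+\infty$ (the function being positive); either alternative contradicts $s\zeta_A(s)\to0$. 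Hence $\zeta_A$ has no pole at $0$.

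Next I would rule out a removable singularity. For the operator of Example \ref{ex} one checks directly from the definition of $\mu(\cdot,A)$ that $\mu(t,A)<1$ for every $t>0$, whence $\log\mu(t,A)<0$ and the integrand $\mu(t,A)^{1+s}$ increases monotonically to $\mu(t,A)$ as $s\downarrow0$. The monotone convergence theorem then yields $\lim_{s\to0^+}\zeta_A(s)=\int_0^\infty\mu(t,A)\,dt$, and the computation already carried out in Example \ref{ex}, namely $\int_0^{e^{e^{n+1}}}\mu(s,A)\,ds=n+\log(n!)+O(1)$, shows this integral diverges. Thus $\zeta_A(s)\to+\infty$ and the singularity is not removable. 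Combined with the previous paragraph, this shows the singularity at $0$ is neither a pole nor removable.

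The argument is short, and I do not expect a genuine obstacle: the only points requiring care are bookkeeping ones. One must invoke the coincidence-of-limits part of Theorem \ref{main eq teor} (and not merely the existence of the limits) in order to extract the precise value $0$ in condition (iv), and one must verify the pointwise bound $\mu(t,A)<1$ that legitimises the monotone convergence step. The resulting intermediate behaviour, in which $\zeta_A(s)\to+\infty$ while $s\zeta_A(s)\to0$, is exactly what sits strictly between a removable singularity and a simple pole.
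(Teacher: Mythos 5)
Your proof is correct and follows essentially the same route as the paper: both rule out poles by combining Theorem \ref{main eq teor} with \eqref{0} to get $\lim_{s\to0}s\tau(A^{1+s})=0$, and both rule out a removable singularity via the divergence of $\tau(A^{1+s})$ as $s\to0$ (i.e.\ $A\notin\mathcal{L}_1$). The only difference is that you supply details the paper leaves implicit — the monotone convergence argument using $\mu(t,A)<1$, and the explicit treatment of poles of order $k\geq2$ — which are sound but not a different method.
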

\begin{proof} Assume the contrary, that is, the $\zeta$-function admits an analytic continuation into the punctured neighborhood of $0$ and has an $n$-th order pole there. By Theorem \ref{main eq teor} and \eqref{0}, we have $\lim_{s\to 0}s\tau(A^{1+s})=0$. Therefore, we have $\lim_{s\to 0}s^n\tau(A^{1+s})=0$, which contradicts the assumption. The $\zeta$-function $s\to\tau(A^{1+s})$ does not have a removable singularity  at $0$ because $A\notin\mathcal{L}_1$ (that is the limit $\lim_{s\to 0}\tau(A^{1+s})$ does not exist).
\end{proof}

It is important to observe that we are also in a position to answer analogues of
Questions \ref{CKS quest1} and \ref{CKS quest2} in the case of arbitrary operators from
$\mathcal{M}_{1,\infty}$ (not necessarily positive). For brevity, we state and prove such analogues
for self-adjoint operators.

\begin{thm}\label{extra1} Suppose that a self-adjoint operator $A\in\mathcal{M}_{1,\infty}$
is such that the following limit
$$\lim_{\lambda\to\infty}M(\lambda\to\frac1{\lambda}(\tau(e^{-(\lambda A_+)^{-1}})-\tau(e^{-(\lambda A_+)^{-1}})))$$
exists. Then the operator $A$ is measurable.
\end{thm}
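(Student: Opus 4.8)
The plan is to mirror the easy implication ${\rm (iii)}\Longrightarrow{\rm (i)}$ of Theorem \ref{main eq teor}, now exploiting the linearity of the heat kernel functionals on the whole of $\mathcal{M}_{1,\infty}$ together with the splitting of a self-adjoint operator into positive and negative parts. (I read the hypothesis with the second exponential governed by $A_-$ rather than $A_+$, since otherwise the displayed function vanishes identically and the statement is empty.) First I would write $A=A_+-A_-$ with $A_\pm\geq0$; since $0\leq A_\pm\leq|A|$ we have $\mu(A_\pm)\leq\mu(A)$, so both $A_+$ and $A_-$ belong to $\mathcal{M}^+_{1,\infty}$, and in particular \eqref{hk marc} applies to each of them. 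Set
$$f(\lambda):=\frac1\lambda\big(\tau(e^{-(\lambda A_+)^{-1}})-\tau(e^{-(\lambda A_-)^{-1}})\big),$$
and let $C:=\lim_{\lambda\to\infty}(Mf)(\lambda)$, which exists by assumption.

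Next I would fix an arbitrary dilation invariant generalised limit $\omega$ and compute, using the linearity of both $M$ and $\omega$ and the definition of $\xi_\omega$,
$$\xi_\omega(A_+)-\xi_\omega(A_-)=(\omega\circ M)\Big(\lambda\to\tfrac1\lambda\tau(e^{-(\lambda A_+)^{-1}})\Big)-(\omega\circ M)\Big(\lambda\to\tfrac1\lambda\tau(e^{-(\lambda A_-)^{-1}})\Big)=\omega(Mf).$$
The splitting of $\omega\circ M$ across the difference is legitimate precisely because each summand $\lambda\to\frac1\lambda\tau(e^{-(\lambda A_\pm)^{-1}})$ is, after applying $M$, a bounded function of $\lambda$ by \eqref{hk marc}. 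Since $Mf$ has the ordinary limit $C$ as $\lambda\to\infty$ and every generalised limit agrees with the ordinary limit on convergent functions, we get $\omega(Mf)=C$, whence
$$\xi_\omega(A_+)-\xi_\omega(A_-)=C\qquad\text{for every dilation invariant generalised limit }\omega.$$

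Finally I would invoke the coincidence, furnished by Theorems \ref{kss all theorem} and \ref{sz all theorem}, of the class $\mathcal{D}$ with the class of all heat kernel functionals. Given any $\tau_\omega\in\mathcal{D}$, there is thus a dilation invariant generalised limit $\omega'$ with $\tau_\omega=\xi_{\omega'}$ as (linear) functionals on $\mathcal{M}_{1,\infty}$, and hence
$$\tau_\omega(A)=\xi_{\omega'}(A)=\xi_{\omega'}(A_+)-\xi_{\omega'}(A_-)=C.$$
As $\tau_\omega\in\mathcal{D}$ was arbitrary, the set $\{\tau_\omega(A):\tau_\omega\in\mathcal{D}\}$ reduces to the single point $C$, so $A$ is measurable in the sense of Definition \ref{meas}. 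The only delicate point — and the one I expect to be the sole obstacle — is the distributivity of $\omega\circ M$ over $f$, i.e.\ confirming that the two pieces are separately admissible arguments of $\xi_\omega$; this is exactly what $A_\pm\in\mathcal{M}^+_{1,\infty}$ and \eqref{hk marc} secure. Note that no Tauberian input (Lemma \ref{second tauberian lemma}) is required here, since we travel in the direction that yields measurability directly rather than the reverse.
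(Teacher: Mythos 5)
Your proof is correct and takes essentially the same approach as the paper: the paper's own proof of Theorem \ref{extra1} is simply the statement that one repeats the implication ${\rm (iii)}\Longrightarrow{\rm (i)}$ of Theorem \ref{main eq teor} verbatim, which is exactly what you do, additionally (and correctly) reading the second $A_+$ in the hypothesis as a typo for $A_-$, and carefully justifying the splitting of $\omega\circ M$ across the difference via \eqref{hk marc} and the identification of $\mathcal{D}$ with the class of heat kernel functionals.
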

\begin{proof} The proof is a verbatim repetition of the arguments used in the proof of the  implication
${\rm (iii)}\Longrightarrow{\rm (i)}$ in Theorem \ref{main eq teor}. We omit further details.
\end{proof}

It is worth remarking  that we cannot ascertain whether, under the assumptions in
Theorem \ref{extra1}, the limit
$$\lim_{t\to\infty}\frac1{\log(1+t)}\int_0^t(\mu(s,A_+)-\mu(s,A_-))ds$$
exists. However, this can be done, if $A$ belongs to the weak $\mathcal{L}_{1}$ space.

\begin{thm}\label{extra2} Suppose that a self-adjoint operator $A\in\mathcal{L}_{1,\infty}$ is such that the following limit
$$\lim_{\lambda\to\infty}\frac1{\lambda}(\tau(e^{-(\lambda A_+)^{-1}})-\tau(e^{-(\lambda A_+)^{-1}}))$$
exists. Then the operator $A$ is measurable, and, in addition, the limit
$$\lim_{t\to\infty}\frac1{\log(1+t)}\int_0^t(\mu(s,A_+)-\mu(s,A_-))ds$$
exists.
\end{thm}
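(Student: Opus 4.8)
The plan is to treat the two conclusions in turn: measurability is deduced from Theorem~\ref{extra1}, while the existence of the Ces\`aro limit of $\mu(\cdot,A_+)-\mu(\cdot,A_-)$ requires an additional Tauberian argument modelled on the implication ${\rm (i)}\Longrightarrow{\rm (ii)}$ of Theorem~\ref{main eq teor}.

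For the first assertion, I would observe that $A_+,A_-\in\mathcal{L}_{1,\infty}$, so by \eqref{hk weakl1} each of the functions $\lambda\mapsto\frac1{\lambda}\tau(e^{-(\lambda A_\pm)^{-1}})$ is bounded, and hence so is their difference. A bounded function that converges as $\lambda\to\infty$ has a convergent (multiplicative) Ces\`aro mean $M$ with the same limit; thus the present hypothesis implies the hypothesis of Theorem~\ref{extra1}, and measurability of $A$ follows at once.

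For the second assertion, write $g_\pm(t)=\frac1{\log(1+t)}\int_0^t\mu(s,A_\pm)\,ds$. Measurability of $A$ means that $\tau_\omega(A)=\tau_\omega(A_+)-\tau_\omega(A_-)=\omega(g_+-g_-)$ takes a single value $C$ as $\omega$ runs over all dilation invariant generalised limits, the last equality being linearity of $\omega$. Taking $\omega=\gamma\circ M$, which is admissible by Remark~\ref{cdix}, gives $\gamma(M(g_+-g_-))=C$ for every generalised limit $\gamma$, and Lemma~\ref{gen lim lemma} then yields $\lim_{t\to\infty}M(g_+-g_-)(t)=C$.

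It remains to strip off the outer averaging. I set $z(t)=t(\mu(t,A_+)-\mu(t,A_-))$ and note the crucial point that, because $A\in\mathcal{L}_{1,\infty}$, both $t\mu(t,A_+)$ and $t\mu(t,A_-)$ are bounded, so $z$ is a bounded---though in general sign-changing---function, while $Mz$ is bounded since $A\in\mathcal{M}_{1,\infty}$. Exactly as in Theorem~\ref{main eq teor} one checks that $(Mz)(t)-(g_+(t)-g_-(t))\to0$, whence $(M^2z)(t)\to C$. The main obstacle is that Lemma~\ref{second tauberian lemma} is stated only for positive $z$, whereas ours changes sign; however, its proof uses positivity solely to bound $tx'(t)=z(e^t)-(Mz)(e^t)$ from below, and the boundedness of $z$ together with that of $Mz$ supplies the same bound $tx'(t)\ge{\rm const}$. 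The conclusion of Lemma~\ref{second tauberian lemma} therefore persists for bounded $z$, giving $\lim_{t\to\infty}(Mz)(t)=C$ and hence $\lim_{t\to\infty}(g_+(t)-g_-(t))=C$. It is precisely this boundedness of $z$---guaranteed in $\mathcal{L}_{1,\infty}$ but not in $\mathcal{M}_{1,\infty}$---that makes the argument work, in accordance with the fact noted before the theorem that no such conclusion is available under the hypotheses of Theorem~\ref{extra1}.
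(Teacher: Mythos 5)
Your proof is correct, but for the second assertion it takes a genuinely different route from the paper's. The paper's own proof consists of two citations: measurability is deduced from Theorem \ref{extra1} exactly as you do (your preliminary observation --- that \eqref{hk weakl1} applied to $A_{\pm}\in\mathcal{L}_{1,\infty}$ makes each function $\lambda\mapsto\frac1{\lambda}\tau(e^{-(\lambda A_{\pm})^{-1}})$ bounded, so that existence of the limit forces existence of its Ces\`aro mean with the same value --- is precisely the ``immediately'' that the paper leaves unsaid), whereas the existence of $\lim_{t\to\infty}\frac1{\log(1+t)}\int_0^t(\mu(s,A_+)-\mu(s,A_-))\,ds$ is simply outsourced to \cite[Corollary 19]{SS}, an unpublished manuscript. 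You instead prove it in-house by rerunning the implication ${\rm (i)}\Longrightarrow{\rm (ii)}$ of Theorem \ref{main eq teor} on the signed function $z(t)=t(\mu(t,A_+)-\mu(t,A_-))$, and you put your finger on exactly the right point: the hypothesis $A\in\mathcal{L}_{1,\infty}$ makes $z$ bounded, and boundedness can replace positivity in Lemma \ref{second tauberian lemma} as the source of the one-sided bound $tx'(t)\geq{\rm const}$. One hairline crack remains in your extension of that lemma: its proof also invokes Lemma \ref{first tauberian lemma}, whose statement requires the function $x=(Mz)\circ\exp$ to be \emph{positive}, and your $x$ may change sign. This is easily sealed, either by noting that the underlying one-sided Tauberian theorem of \cite[Section 6.8]{Hardy} needs no positivity, or, most cleanly, without reopening any proofs: since $M$ fixes constant functions, apply Lemma \ref{second tauberian lemma} as stated to the positive bounded function $z+\|z\|_{\infty}$, for which $M^2(z+\|z\|_{\infty})=M^2z+\|z\|_{\infty}\to C+\|z\|_{\infty}$, and conclude $(Mz)(t)\to C$. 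With this repair your argument is complete and self-contained; what it buys over the paper's proof is independence from the external reference \cite{SS}, and it also makes transparent why the conclusion cannot be expected in $\mathcal{M}_{1,\infty}$ (where $z$ need not be bounded), in line with the paper's remark preceding the theorem and with Remark \ref{more}.
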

\begin{proof} The first assertion follows immediately from Theorem \ref{extra1}.
The second assertion is provided by { \cite[Corollary 19]{SS}.}
\end{proof}
\begin{rem}
\label{more}The assumption $A\in\mathcal{L}_{1,\infty}$ in Theorem \ref{extra2} above can be further weakened by requesting $\mu(t;A)=o(\frac{log(1+t)}{t})$ for sufficiently large $t>0$. In a sense the latter is the best possible, in particular, the assertion of Theorem \ref{extra2} fails if the latter condition does not hold. For details, we refer the reader to \cite{SS}.
\end{rem}

\section{The case $p>1$ and examples}

\subsection{Notations}

We firstly say a few words concerning the notations.

In the paper \cite{C1}(where the applications of Dixmier traces
to noncommutative geometry were first presented) Alain Connes
considered the ideal ${\mathcal L}^{1+}$ of all compact operators
$T$ on an infinite-dimensional Hilbert space ${H}$ whose singular
values $\{\mu(j, T)\}_{j\in \mathbb N}$ satisfy
$$
\sup_{N>1}\frac{1}{\log N}\sum _{j=1}^N\mu(j, T)<\infty.
$$
This ideal later, in \cite[p.303]{C} was denoted by ${\mathcal
L}^{(1,\infty)}$. Further,  in \cite[p.677]{C1},
 the ideal ${\mathcal L}^{n+}$, whose $n^{th}$ root lies in ${\mathcal
L}^{1+}$ in $B(H)$, was introduced.
It is noted in \cite{C} that the ideals ${\mathcal L}^{(p,\infty)}$
correspond to the notion of weak ${\mathcal L}^p$-spaces in
classical analysis. An alternative notation ${\mathcal
L}^{p+}$ is also mentioned.

It is now important to realize that there is a small notational
discrepancy here, and addressing this discrepancy, we have used
another notation for the space ${\mathcal L}^{1+}$ in \cite{C1}
and ${\mathcal L}^{(1,\infty)}$ in \cite{C}. Namely, we used the
symbol $\mathcal{M}_{1,\infty}$. We now explain a little bit more
about our choice.

As noted in \cite{C}, the Banach space
$(\mathcal{M}_{1,\infty},\|\cdot\|_{\mathcal{M}_{1,\infty}})$ was
probably first considered by Macaev \cite{Mat} (with yet another
notation, which we do not use here at all  in order not to confuse the
reader) as the dual space to the ideal which is customarily
called, a {\it Macaev ideal}. For a complete exposition of
the theory of these spaces and detailed references, we refer the
reader to the books \cite{GK1, GK2}.  The reason we used
this notation is due to the fact that the space
$(\mathcal{M}_{1,\infty},\|\cdot\|_{\mathcal{M}_{1,\infty}})$ may
be viewed as a noncommutative analogue of a Sargent (sequence)
space, see \cite{Sar}. This fact is explained in the
article \cite{Pie} by A.Pietsch, for which we refer the reader for
a fuller treatment of the history of the space
$\mathcal{M}_{1,\infty}$ and additional references. We follow
this notation also because it allows us to
reserve $\mathcal{L}_{1,\infty}$ for the
well-established notion of quasi-normed weak $L_1$-space (which
we identify here with a non-closed subspace in
$(\mathcal{M}_{1,\infty},\|\cdot\|_{\mathcal{M}_{1,\infty}})$).

The classical  $p$-convexification procedure for an arbitrary
Banach lattice $X$ is described in \cite[Section 1.d]{LT2} and is
sometimes termed power norm transformation. It is simply a direct
generalization of the procedure of defining $L_p$-spaces from an
$L_1$-space. Applying the analogous operation to the ideal
$\mathcal{M}_{1,\infty}$, we obtain the space $\mathcal{Z}_p$
firstly introduced and (alternatively) described in \cite{CRSS}.
It is unfortunate that, due to other notations used in
\cite{CRSS}, the space $\mathcal{Z}_p$ was identified there with
the notation ${\mathcal L}_{p,\infty}$. One of the reasons, we
have switched to the notations $\mathcal{M}_{1,\infty}$ and
$\mathcal{L}_{1,\infty}$ is that the notation ${\mathcal
L}_{p,\infty}$ is then properly associated with the
$p$-convexification of the weak $L_1$ space
$\mathcal{L}_{1,\infty}$. In this way, our usage of the symbol
$\mathcal{L}_{p,\infty}$ is perfectly compatible with the usage of
the same symbol in \cite{C} for all $p>1$ (excepting $p=1$ for
which we use $\mathcal{M}_{1,\infty}$). It is now natural to
denote the space $\mathcal{Z}_p$ by the symbol
$\mathcal{M}_{p,\infty}$.  Thus, the space
$\mathcal{M}_{p,\infty}$ is exactly obtained by asking for $p$-th roots in
$\mathcal{M}_{1,\infty}$ and coincides with the space $L^{p+}$
from \cite{C1}, whereas the $p$-convexification of its subspace
$\mathcal{L}_{1,\infty}$ equipped with the weak quasi-norm yields
the Banach space $\mathcal{L}_{p,\infty}$ and this is exactly the
same space from \cite{C} which we cited above, at the beginning of
this subsection.

\subsection{Results}
The following assertion is a consequence of Theorem \ref{main eq
teor}.

\begin{cor}\label{main eq teorrrr} Let $A\in\mathcal{Z}_p=\mathcal{M}_{p,\infty}$ be a positive operator.
The following conditions are equivalent.
\begin{enumerate}
\item[{\rm (i)}]\label{sx11} The operator $A^p$ is measurable.
\item[{\rm (ii)}]\label{sx12} The limit
$\lim_{t\to\infty}\frac1{\log(1+t)}\int_0^t\mu(s,A^p)ds$ exists.
\item[{\rm (iii)}]\label{sx13} The limit
$\lim_{\lambda\to\infty}M(\lambda\to\frac1{\lambda^p}\tau(e^{-(\lambda A)^{-p}}))$ exists.
\item[{\rm (iv)}]\label{sx14} The limit
$\lim_{s\to 0}s\tau(A^{p+s})$ exists.
\end{enumerate}
Furthermore, if any of the conditions (i)-(iv) above holds, then we have the coincidence of the three limits
$$\lim_{t\to\infty}\frac1{\log(1+t)}\int_0^t\mu(s,A^p)ds=\lim_{\lambda\to\infty}M(\lambda\to\frac1{\lambda^p}\tau(e^{-(\lambda A)^{-p}})) =\frac1p\lim_{s\to 0}s\tau(A^{p+s})$$
with the value given by $\{\tau_{\omega}(A^p):\ \tau_\omega\in\mathcal{D}\}.$
\end{cor}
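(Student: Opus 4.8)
The plan is to reduce everything to Theorem~\ref{main eq teor} applied to the single positive operator $B:=A^p$. By the very definition of $\mathcal{M}_{p,\infty}=\mathcal{Z}_p$ as the space obtained by taking $p$-th roots in $\mathcal{M}_{1,\infty}$, the condition $A\in\mathcal{M}_{p,\infty}$ is equivalent to $B=A^p\in\mathcal{M}_{1,\infty}$, and positivity of $A$ gives positivity of $B$. Hence Theorem~\ref{main eq teor} is available for $B$, and it remains only to match conditions (i)--(iv) of the corollary with conditions (i)--(iv) of that theorem for $B$.

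Conditions (i) and (ii) require nothing further: (i) is literally the measurability of $B=A^p$, while (ii) is the statement that $\lim_{t\to\infty}\frac{1}{\log(1+t)}\int_0^t\mu(s,B)\,ds$ exists, since $\mu(s,A^p)=\mu(s,B)$. For (iv) I would change variables $u=ps$ in the $\zeta$-function expression: from $B^{1+s}=A^{p(1+s)}=A^{p+ps}$ one gets $s\tau(B^{1+s})=\frac{u}{p}\tau(A^{p+u})$, so that $\lim_{s\to0}s\tau(B^{1+s})$ exists if and only if $\lim_{s\to0}s\tau(A^{p+s})$ does, the two limits being related by the factor $\frac1p$. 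This is exactly the factor appearing in the final coincidence of limits.

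The one genuine computation is the matching of condition (iii). Writing $(\lambda A)^{-p}=\lambda^{-p}A^{-p}$ and setting $f(\mu):=\frac1\mu\tau(e^{-(\mu B)^{-1}})=\frac1\mu\tau(e^{-\mu^{-1}A^{-p}})$, the substitution $\mu=\lambda^p$ shows that the integrand in (iii), namely $\lambda\mapsto\frac1{\lambda^p}\tau(e^{-(\lambda A)^{-p}})$, equals $f(\lambda^p)$. The key point is then a change of variable $\sigma=s^p$ inside the multiplicative Ces\`aro integral, which yields the identity
\[
\bigl(M(f\circ{\rm pow}_p)\bigr)(\nu)=(Mf)(\nu^p),\qquad {\rm pow}_p(s):=s^p,
\]
so that the limit in (iii) of the corollary coincides with the limit in (iii) of Theorem~\ref{main eq teor} for $B$ (using $\nu^p\to\infty$ as $\nu\to\infty$). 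I expect this Ces\`aro substitution identity to be the main, and only slightly delicate, step; everything else is bookkeeping.

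With the four conditions identified, the equivalence (i)$\Leftrightarrow$(ii)$\Leftrightarrow$(iii)$\Leftrightarrow$(iv) and the common value follow at once from Theorem~\ref{main eq teor}: the first two limits equal $\tau_\omega(B)=\tau_\omega(A^p)$ for every $\tau_\omega\in\mathcal{D}$, while the third limit of that theorem, $\lim_{s\to0}s\tau(B^{1+s})$, equals $\frac1p\lim_{s\to0}s\tau(A^{p+s})$ by the computation in (iv), giving precisely the stated coincidence with the displayed factor $\frac1p$ and the value $\{\tau_\omega(A^p):\ \tau_\omega\in\mathcal{D}\}$.
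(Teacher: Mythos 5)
Your proof is correct and takes essentially the same route as the paper: reduce to Theorem~\ref{main eq teor} applied to $B=A^p$, match condition (iv) by the rescaling $u=ps$, and match condition (iii) via the Ces\`aro substitution identity, which is precisely the commutation $MP=PM$ for the operator $(Px)(t)=x(t^p)$ that the paper cites from \cite[Proposition 1.3(4)]{CPS}. The only difference is that you verify this identity by direct computation rather than by citation, and your computation is valid.
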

\begin{proof} {Set $B=A^p.$ Clearly, $B\in\mathcal{M}_{1,\infty}$ and
$$\lim_{s\to0}s\tau(A^{p+s})=p\lim_{s\to0}s\tau(B^{1+s}).$$}
Let $P:L_{\infty}(0,\infty)\to L_{\infty}(0,\infty)$ be the operator defined by setting $(Px)(t)=x(t^p),$ $x\in L_{\infty}(0,\infty),$ $t>0.$ We have $PM=MP$ (see \cite[Proposition 1.3(4)]{CPS}). Hence,
$$\lim_{\lambda\to\infty}M(\lambda\to\frac1{\lambda^p}\tau(e^{-(\lambda A)^{-p}}))=\lim_{\lambda\to\infty}MP(\lambda\to\frac1{\lambda}\tau(e^{-(\lambda B)^{-1}}))=$$
$$=\lim_{\lambda\to\infty}PM(\lambda\to\frac1{\lambda}\tau(e^{-(\lambda B)^{-1}}))=\lim_{\lambda\to\infty}M(\lambda\to\frac1{\lambda}\tau(e^{-(\lambda B)^{-1}})).$$
From this last equality it is clear that the result follows immediately from Theorem \ref{main eq teor}.
\end{proof}

\begin{rem} The implication $(iii)\to(i)$ of Theorem \ref{main eq teorrrr} significantly strengthens Proposition 5.3 of \cite{CRSS}.
This is because we require here only the existence of the limit in $(iii)$ and not an asymptotic expansion for the heat kernel as is assumed in \cite{CRSS} and furthermore we do not require $\omega$ to be $M-$invariant as is needed in \cite{CRSS}.
\end{rem}

The next corollary prepares the way for a discussion of heat kernel bounds. It follows from Proposition \ref{obyasn prop} and parallels \cite[Proposition 4.2]{CPS}. The latter proposition looks similar to the one below, however its proof is totally different. 

\begin{cor}\label{parallel} Let $A\in\mathcal{Z}_p=\mathcal{M}_{p,\infty}$ be a positive operator such that $A^p$ is measurable. We have
$$\lim_{\lambda\to\infty}M(\lambda\to\frac1{\lambda^p}\tau(e^{-(\lambda A)^{-2}}))=\frac12\Gamma(\frac{p}2)\lim_{s\to0}s\tau(A^{p+s}).$$
\end{cor}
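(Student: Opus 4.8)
The plan is to reduce the statement directly to Proposition \ref{obyasn prop} applied to the positive operator $B:=A^p$, which lies in $\mathcal{M}_{1,\infty}$ and is measurable by hypothesis. Proposition \ref{obyasn prop} is stated for an arbitrary exponent $q>0$, and the factor $\Gamma(1+1/q)$ appearing there should, after the correct substitutions, produce exactly the factor $\frac12\Gamma(p/2)$ in the claim. I would choose $q=2/p$, so that $(\mu B)^{-q}=\mu^{-2/p}B^{-2/p}=\mu^{-2/p}A^{-2}$, matching the exponent $-2$ on $A$ in the heat kernel of the corollary.

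The key step is a change of variable in the spectral parameter identifying the two heat kernel functions. With $q=2/p$, Proposition \ref{obyasn prop} gives
$$\lim_{\mu\to\infty}M\Bigl(\mu\to\frac1\mu\tau(e^{-(\mu B)^{-2/p}})\Bigr)=\Gamma(1+\tfrac p2)\lim_{s\to0}s\tau(B^{1+s}).$$
Writing $g(\mu)=\frac1\mu\tau(e^{-\mu^{-2/p}A^{-2}})$ and substituting $\mu=\lambda^p$, one checks that $g(\lambda^p)=\frac1{\lambda^p}\tau(e^{-(\lambda A)^{-2}})$, i.e. $Pg$ is precisely the function whose Ces\`aro mean appears in the corollary, where $(Px)(t)=x(t^p)$. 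Invoking the commutation relation $PM=MP$ from \cite[Proposition 1.3(4)]{CPS} (already used in the proof of Corollary \ref{main eq teorrrr}) together with $\lambda^p\to\infty$, I would conclude that
$$\lim_{\lambda\to\infty}M\Bigl(\lambda\to\frac1{\lambda^p}\tau(e^{-(\lambda A)^{-2}})\Bigr)=\lim_{\mu\to\infty}M(g)(\mu)=\Gamma(1+\tfrac p2)\lim_{s\to0}s\tau(B^{1+s}).$$

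It then remains to convert the right-hand side into the form stated in the corollary. Since $B^{1+s}=A^{p(1+s)}=A^{p+ps}$, a further change of variable $s\mapsto ps$ in the residue gives $\lim_{s\to0}s\tau(B^{1+s})=\frac1p\lim_{s\to0}s\tau(A^{p+s})$. Finally, the functional equation $\Gamma(1+p/2)=\frac p2\Gamma(p/2)$ collapses the constant $\frac1p\Gamma(1+p/2)$ to $\frac12\Gamma(p/2)$, yielding the asserted equality.

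I do not anticipate a genuine obstacle here: every ingredient (measurability of $A^p$, Proposition \ref{obyasn prop}, the relation $PM=MP$, and the functional equation for $\Gamma$) is either hypothesised or already available in the excerpt. The only point requiring care is the bookkeeping of the two changes of variable — one in the spectral parameter $\lambda$ and one in the residue exponent $s$ — so that the powers of $\lambda$, the choice $q=2/p$, and the resulting Gamma factor all match up consistently.
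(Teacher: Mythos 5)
Your proposal is correct and follows essentially the same route as the paper's own proof: set $B=A^p$, apply Proposition \ref{obyasn prop} with $q=2/p$, transfer between the two heat kernel functions via the substitution $\lambda\mapsto\lambda^p$ and the commutation $PM=MP$ (exactly the argument of Corollary \ref{main eq teorrrr}), and finish with $\lim_{s\to0}s\tau(B^{1+s})=\frac1p\lim_{s\to0}s\tau(A^{p+s})$ and $\Gamma(1+p/2)=\frac p2\Gamma(p/2)$. The bookkeeping of the changes of variable and the Gamma factor is all accurate, so nothing needs to be fixed.
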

\begin{proof} Set $B=A^p.$ Clearly, $B\in\mathcal{M}_{1,\infty}$. Using the same argument as in the proof of Corollary \ref{main eq teorrrr}, we obtain
$$\lim_{\lambda\to\infty}M(\lambda\to\frac1{\lambda^p}\tau(e^{-(\lambda A)^{-2}}))= \lim_{\lambda\to\infty}M(\lambda\to\frac1{\lambda}\tau(e^{-\lambda^{-2/p}A^{-2}})).$$
Now, by Proposition \ref{obyasn prop}, we have
$$\lim_{\lambda\to\infty}M(\lambda\to\frac1{\lambda}\tau(e^{-(\lambda B)^{-2/p}}))=\Gamma(1+\frac{p}2)\lim_{s\to0}s\tau(B^{1+s}).$$
Finally, we write
$$\lim_{s\to0}s\tau(B^{1+s})=\frac1p\lim_{s\to0}s\tau(A^{p+s}).$$
\end{proof}

%
%
\subsection{Discussion}
It would of course be interesting to find examples that flesh out
Corollary \ref{main eq teorrrr}. Recent work on heat kernels
on metric spaces (such as fractals) is promising. These examples illustrate that there is a heirarchy of conditions on the asymptotics  of zeta functions and heat kernels.
In the study of diffusion processes on fractals \cite{K} one is given
the generator of the heat semigroup $\Delta$ as a positive self adjoint densely defined operator.
Then we assume that  $\zeta_\Delta^{s}=\tau(\Delta^{-s/2})<\infty$ for
all $s>p$ where $p$ is called the spectral dimension. (For simplicity we are going to assume
that $\Delta$ has bounded inverse for if not there is a simple remedy \cite{CPS}.)

The weakest condition we can impose is that there are constants $C_0, C_0'$ with:
\begin{equation} \label{zetab} C_0'\leq (s-p)\zeta_\Delta(s)\leq C_0\end{equation}
for all $s>p$.
It follows from Theorem 4.5 of \cite{CRSS} that the operator $\Delta^{-p/2}\in\mathcal{M}_{1,\infty}$  however, it also follows by
an example in \cite{CRSS} (which does not come from any concrete diffusion process but is an artificial counterexample) that this bound is insufficient to obtain heat kernel
bounds and that the best we can do is the bound (\ref{hk marc}) where we need to insert the Cesaro mean.

On the other hand a heat kernel bound of the form
\begin{equation}\label{bound}
C^{-1}t^{-p/2}\leq\tau(e^{-t\Delta})\leq C t^{-p/2},\quad 0<t<1
\end{equation}
(which is known to hold for some diffusion processes on metric spaces
and in particular for certain fractals, see for example \cite{K})  is stronger than the zeta function bound (\ref{zetab}) as can
be seen by the following elementary argument.

Recall that if $B\in\mathcal{M}$ is a positive operator then it follows from
the  spectral theorem that
$$B^{s/2}=\frac1{\Gamma(s/2)}\int_0^{\infty}t^{s/2-1}e^{-tB^{-1}}dt.$$
Next, suppose that (\ref{bound}) holds.   Setting $B=\Delta^{-1}$, it follows from \eqref{bound} that for all $s>p$ we have
\begin{equation}\label{split}\int_0^{\infty}t^{s/2-1}\tau(e^{-t\Delta})dt\leq
 C\int_0^1t^{(s-p)/2-1}dt+\int_1^{\infty}t^{s/2-1}\tau(e^{-\Delta})e^{-(t-1)\|\Delta^{-1}\|^{-1}}dt\end{equation}
$$\leq\frac{2C}{s-p}+e^{\|\Delta^{-1}\|^{-1}}\tau(e^{-\Delta})\frac{\Gamma(s/2)}{\|\Delta^{-1}\|^{s/2}}.$$
It follows from Fatou lemma that $B^{s/2}$ is trace class for all $s>p$ and so
$$(s-p)\zeta_\Delta(s)\leq 2C+o(1),\quad s\downarrow p.$$
Similarly, we have  $$\int_0^{\infty}t^{s/2-1}\tau(e^{-t\Delta})dt\geq
 C^{-1}\int_0^1t^{(s-p)/2-1}dt=\frac{2C^{-1}}{s-p}$$
and therefore
$$2C^{-1}\leq(s-p)\zeta_\Delta(s)
\quad s\downarrow p.$$

We have assumed that the function $s\to\tau(\Delta^{-s/2})$ is analytic in $s$ for $\Re(s)>p$ and that it may have a singularity at $s=p.$ However, we saw in Lemma \ref{mero} that the nature of this singularity is not obvious in general.

It is well known (and, in the context of the questions discussed here, explained in \cite{CRSS}) how an asymptotic expansion for small $t$ of the form
$ \tau(e^{-t\Delta})\sim Ct^{-p/2}+ O(t^{-\alpha/2})$, where $\alpha<p$,
implies that the $\zeta$-function has a meromorphic continuation to a half plane $\Re(s)>p-\epsilon$, for some $\epsilon>0$ with
the only singularity in this half plane being a simple pole at $s=p$. 
However such an assumption is not in line with what has been found for certain
fractals. 

There is a discussion of
the pole structure of the zeta function for certain fractal diffusion processes in
\cite{DGV}, \cite{ST}, and literature cited therein.
There we find fractals where
$\zeta_\Delta$ is meromorphic with simple poles on the line $\{p+iv\vert v\in \mathbb R\}$.
To discuss this situation we can employ here a well known argument similar to that of \cite{CRSS}, in particular the ideas introduced
 in the proof of Theorem 5.2  in that paper (where we used  the notation $T=\Delta^{-1}$). 

We have
$\zeta_\Delta(s)=\int_0^{\infty}t^{s/2-1}\tau(e^{-t\Delta})dt$ and may split this integral into two parts
as in (\ref{split}). Then
only
$\int_0^{1}t^{s/2-1}\tau(e^{-t\Delta})dt$
contributes to the singularity at $s=p$ (as we exploited  in \cite{CRSS}).
Now suppose that we have a simple asymptotic expansion of the form $ tr(e^{-t\Delta}\sim Ct^{-p/2}+ O(t^{-\alpha/2})$
with $\alpha<p$ for $0<t<1$. Then
$$\int_0^{1}t^{s/2-1}\tau(e^{-t\Delta})dt=\frac{2C}{s-p} +G(s)$$
where $G(s)=\int_0^1 (t^{s/2-1}\tau(e^{-t\Delta})-Ct^{(s-p)/2})dt$. Here
the integrand is by assumption continuous and $O(t^{(s-\alpha)/2})$ and hence $G$ is analytic for $\Re(s)>\alpha$
 and in particular on the line
$\{p+iv\vert v\in \mathbb R\}$ which is inconsistent with the assumption of there being poles
on this line. Thus the asymptotic behaviour of the trace of the heat kernel
must be more complicated for such fractals.

There is a positive result that we obtain from
Corollary \ref{parallel}.
Setting $t=\lambda^{-2}$ and $A=\Delta^{-1/2}$ in the formula in this Corollary gives
$$\lim_{t\to 0}M(t\to\tau(t^{p/2}e^{-t \Delta}))=\frac12\Gamma(\frac{p}2)\lim_{z\to0}z\tau(\Delta^{-(p+z)/2}).$$
Connecting with our previous notation we set $s=p+z$
and see that the presence of a simple pole at $s=p$ for the zeta function means that
$\lim_{t\to 0}M(t\to\tau(t^{p/2}e^{-t \Delta}))$
exists.
This simple pole behavior at $s=p$ is  conjectured in \cite{ST}
to be a generic feature of a
certain class of fractals. Our Corollary \ref{parallel} suggests
that to infer from this, information about the trace of the heat kernel
for small $t$, it is
more promising to investigate the asymptotic behavior of
\begin{equation}\label{Ces} M(t\to t^{p/2}\tau(e^{-t \Delta})).
\end{equation}

To illustrate this we use \cite{ABS}. There it is shown that
for the Sierpinski Gasket one has for small $t$,
\begin{equation}\label{Serp}\tau(e^{-t\Delta})= t^{-\beta}\gamma(t) +o(t^{-\beta}) 
\end{equation}
where $\beta= \log 3/\log 5$ and

\begin{equation}\label{gamma}\gamma(t)= \sum_{-\infty}^{\infty} c_n\Gamma(1+\beta +\frac{2\pi i n}{\log 5}) e^{-2\pi i n \log t/log 5}.\end{equation}
Numerical evidence supports the conjecture that
$\gamma(t) =a+b \sin\frac{2\pi}{\log 5}(\log t-c)$ for some real $a,b,c$.
If this conjecture is true we can insert equation (\ref{Serp})
into equation (\ref{Ces}). Then we make the change of variable $t=\lambda^{-2}$
and consider
 the resulting Cesaro mean (\ref{Cesaro}) as a function of the asymptotic variable $\nu$.
We see that it is bounded by $a+\frac{C}{\log \nu}$
for some constant $C$ as $\nu \to \infty$ so that the $t$ independent constant $a$
in $\gamma(t)$ gives the required zeta function residue.
%



\end{document}